\numberwithin{equation}{section}
\newtheorem{theorem}{Theorem}[section]
\newtheorem{proposition}[theorem]{Proposition}
\newtheorem{definition}[theorem]{Definition}
\newtheorem{lemma}[theorem]{Lemma}
\newtheorem{corollary}[theorem]{Corollary}
\newtheorem{theorem*}{Theorem}
\newcommand{\Tor}{\operatorname{Tor}}
\newcommand{\Mod}{\operatorname{Mod}}
\newcommand{\Hom}{\operatorname{Hom}}
\newcommand{\Ext}{\operatorname{Ext}}
\newcommand{\ra}{\rightarrow}
\def\max{\mathop{\rm max}\nolimits}
\def\Ker{\mathop{\rm Ker}\nolimits}
\def\Im{\mathop{\rm Im}\nolimits}
\def\Coker{\mathop{\rm Coker}\nolimits}
\def\pd{\mathop{\rm pd}\nolimits}
\def\fd{\mathop{\rm fd}\nolimits}
\def\Mod{\mathop{\rm Mod}\nolimits}
\def\Prod{\mathop{\rm Prod}\nolimits}
\title{ \bf Duality Pairs Induced by Auslander and Bass Classes\thanks{2010 Mathematics Subject Classification: 18G25, 16E10, 16E30.}
\thanks{Keywords: Duality pairs, Auslander classes, Bass classes, Semidualizing bimodules, (Pre)covers, (Pre)envelopes, Cotorsion pairs,
Auslander projective dimension.
}}
\author{Zhaoyong Huang\thanks{{\it E-mail address}: huangzy@nju.edu.cn}\\
{\footnotesize \it Department of Mathematics, Nanjing University, Nanjing 210093, Jiangsu Province, P.R. China}}
\date{ }
\begin{document}

\baselineskip=16pt
\maketitle

\begin{abstract}
Let $R$ and $S$ be any rings and $_RC_S$ a semidualizing bimodule, and let $\mathcal{A}_C(R^{op})$
and $\mathcal{B}_C(R)$ be the Auslander and Bass classes respectively. Then both the pairs
$$(\mathcal{A}_C(R^{op}),\mathcal{B}_C(R))\ {\rm and}\ (\mathcal{B}_C(R),\mathcal{A}_C(R^{op}))$$
are coproduct-closed and product-closed duality pairs and both
$\mathcal{A}_C(R^{op})$ and $\mathcal{B}_C(R)$ are covering and preenveloping; in particular,
the former duality pair is perfect. Moreover,
if $\mathcal{B}_C(R)$ is enveloping in $\Mod R$, then $\mathcal{A}_C(S)$ is enveloping in $\Mod S$.
Then some applications to the Auslander projective dimension of modules are given.
\end{abstract}

\pagestyle{myheadings}
\markboth{\rightline {\scriptsize Z. Y. Huang}}
         {\leftline{\scriptsize  Duality Pairs Induced by Auslander and Bass Classes}}

\section{Introduction} 

In relative homological algebra, the theory of covers and envelopes is fundamental and important.
Let $R$ be a ring and $\Mod R$ the category of left $R$-modules. Given a subcategory of $\Mod R$,
it is always worth studying whether or when it is (pre)covering or (pre)enveloping.
This problem has been studied extensively, see \cite{BR}--\cite{HJ09} and references therein.

Let $R$ be a commutative noetherian ring and $C$ a semidualizing $R$-module,
and let $\mathcal{A}_C(R)$ and $\mathcal{B}_C(R)$ be the Auslander and Bass classes respectively.
By proving that both $\mathcal{A}_C(R)$ and $\mathcal{B}_C(R)$ are Kaplansky classes, Enochs and Holm
got in \cite[Theorems 3.11 and 3.12]{EH} that the pair $(\mathcal{A}_C(R),(\mathcal{A}_C(R))^\bot)$
is a perfect cotorsion pair, $\mathcal{A}_C(R)$ is covering and preenveloping
and $\mathcal{B}_C(R)$ is preenveloping. Holm and J{\o}rgensen introduced the notion of duality pairs
and proved the following remarkable result. Let $R$ be an arbitrary ring, and let $\mathscr{X}$ and
$\mathscr{Y}$ be subcategories of $\Mod R$ and $\Mod R^{op}$ respectively.
When $(\mathscr{X},\mathscr{Y})$ is a duality pair, the following assertions hold true:
(1) If $\mathscr{X}$ is closed under coproducts, then $\mathscr{X}$ is covering;
(2) if $\mathscr{X}$ is closed under products, then $\mathscr{X}$ is preenveloping; and
(3) if $_RR\in\mathscr{X}$ and $\mathscr{X}$ is closed under coproducts and extensions,
then $(\mathscr{X},\mathscr{X}^{\perp})$ is a perfect cotorsion pair
(\cite[Theorem 3.1]{HJ09}). By using it, they generalized the above result of Enochs and Holm to
the category of complexes, and Enochs and Iacob investigated in \cite{EI} the existence of Gorenstein
injective envelopes over commutative noetherian rings.

Let $R$ and $S$ be arbitrary rings and $_RC_S$ a semidualizing bimodule, and let $\mathcal{A}_C(R^{op})$
be the Auslander class in $\Mod R^{op}$ and $\mathcal{B}_C(R)$ the Bass class in $\Mod R$.
Our first main result is the following

\begin{theorem}\label{1.1} {\rm (Theorem \ref{3.3})}
\begin{enumerate}
\item[(1)] Both the pairs
$$(\mathcal{A}_C(R^{op}),\mathcal{B}_C(R))\ and\ (\mathcal{B}_C(R),\mathcal{A}_C(R^{op}))$$
are coproduct-closed and product-closed duality pairs; and furthermore, the former one is perfect.
\item[(2)] $\mathcal{A}_C(R^{op})$ is covering and preenveloping in $\Mod R^{op}$
and $\mathcal{B}_C(R)$ is covering and preenveloping in $\Mod R$.
\end{enumerate}
\end{theorem}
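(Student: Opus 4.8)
The plan is to recognize each of the two pairs as a duality pair in the sense of Holm and J{\o}rgensen, and then to read off every conclusion from \cite[Theorem 3.1]{HJ09}. Throughout write $(-)^{+}=\Hom_{\mathbb{Z}}(-,\mathbb{Q}/\mathbb{Z})$ for the character module functor; it is exact and faithful, so a homomorphism $f$ is an isomorphism if and only if $f^{+}$ is, and $X^{+}=0$ if and only if $X=0$.

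The crucial step is to establish the two character-module equivalences
\[
M\in\mathcal{A}_C(R^{op})\iff M^{+}\in\mathcal{B}_C(R)\qquad(M\in\Mod R^{op}),
\]
\[
N\in\mathcal{B}_C(R)\iff N^{+}\in\mathcal{A}_C(R^{op})\qquad(N\in\Mod R).
\]
For this I would assemble the standard natural isomorphisms attached to $C$: the Hom--tensor adjunctions $(M\otimes_{R}C)^{+}\cong\Hom_{R}(C,M^{+})$ and $(C\otimes_{S}L)^{+}\cong\Hom_{S^{op}}(C,L^{+})$, which hold for any $C$, together with $N^{+}\otimes_{R}C\cong\Hom_{R}(C,N)^{+}$ and $\Hom_{S^{op}}(C,L)^{+}\cong C\otimes_{S}L^{+}$, which hold because $C$ is finitely presented as a left $R$-module and as a right $S$-module, respectively. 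Replacing $C$ by a projective resolution consisting of finitely generated projectives --- available on each side since ${}_{R}C_{S}$ is semidualizing --- and passing to homology upgrades these to $\Tor_{i}^{R}(M,C)^{+}\cong\Ext^{i}_{R}(C,M^{+})$, $\Tor_{i}^{R}(N^{+},C)\cong\Ext^{i}_{R}(C,N)^{+}$, $\Ext^{i}_{S^{op}}(C,M\otimes_{R}C)^{+}\cong\Tor_{i}^{S}(C,\Hom_{R}(C,M^{+}))$ and $\Ext^{i}_{S^{op}}(C,N^{+}\otimes_{R}C)\cong\Tor_{i}^{S}(C,\Hom_{R}(C,N))^{+}$. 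Substituting these into the three defining conditions of $\mathcal{A}_C(R^{op})$ and $\mathcal{B}_C(R)$ --- vanishing of the relevant $\Tor$, vanishing of the relevant $\Ext$, and bijectivity of the evaluation morphism --- and checking that the evaluation maps $M\to\Hom_{S^{op}}(C,M\otimes_{R}C)$ and $C\otimes_{S}\Hom_{R}(C,N)\to N$ correspond to one another, under $(-)^{+}$ and the isomorphisms above, one deduces the two equivalences from the faithful exactness of $(-)^{+}$. Since every functor in sight is additive, $\mathcal{A}_C(R^{op})$ and $\mathcal{B}_C(R)$ are at once closed under finite direct sums and direct summands, so both $(\mathcal{A}_C(R^{op}),\mathcal{B}_C(R))$ and $(\mathcal{B}_C(R),\mathcal{A}_C(R^{op}))$ are duality pairs. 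I expect this bookkeeping of natural isomorphisms --- and, in particular, the matching of the two evaluation morphisms under duality --- to be the only genuinely technical point; the rest is formal.

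To finish, I would verify the closure hypotheses of \cite[Theorem 3.1]{HJ09}. Because ${}_{R}C_{S}$ admits degreewise finite projective resolutions on both sides, the functors $\Hom_{R}(C,-)$, $\Hom_{S^{op}}(C,-)$, $\Ext^{i}_{R}(C,-)$ and $\Ext^{i}_{S^{op}}(C,-)$ commute with arbitrary direct sums, while $-\otimes_{R}C$, $C\otimes_{S}-$, $\Tor_{i}^{R}(-,C)$ and $\Tor_{i}^{S}(C,-)$ commute with arbitrary direct products; consequently each of the three conditions cutting out $\mathcal{A}_C(R^{op})$ and $\mathcal{B}_C(R)$ is preserved under coproducts and under products, so both duality pairs are coproduct-closed and product-closed. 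For the perfectness of $(\mathcal{A}_C(R^{op}),\mathcal{B}_C(R))$ it remains to see that $\mathcal{A}_C(R^{op})$ is closed under extensions --- a routine chase with the long exact $\Tor$- and $\Ext$-sequences and the five lemma --- and that $R$, regarded as a right $R$-module (the regular module of $R^{op}$), lies in $\mathcal{A}_C(R^{op})$; the latter holds since $R\otimes_{R}C\cong C$ has no higher $\Tor$ with $C$ and no higher $\Ext$ against $C$, and the evaluation map $R\to\Hom_{S^{op}}(C,C)$ is exactly the homothety isomorphism built into the definition of a semidualizing bimodule. (By contrast $R$ need not belong to $\mathcal{B}_C(R)$, which is why only the first pair is claimed to be perfect.) Now \cite[Theorem 3.1]{HJ09} applies: coproduct-closedness yields that $\mathcal{A}_C(R^{op})$ is covering in $\Mod R^{op}$ and $\mathcal{B}_C(R)$ is covering in $\Mod R$, product-closedness yields that both are preenveloping, and perfectness of $(\mathcal{A}_C(R^{op}),\mathcal{B}_C(R))$ yields that $(\mathcal{A}_C(R^{op}),\mathcal{A}_C(R^{op})^{\perp})$ is a perfect cotorsion pair; together these give all assertions of the theorem.
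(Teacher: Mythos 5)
Your proposal is correct and follows essentially the same route as the paper: establish the character-module equivalences $N\in\mathcal{A}_C(R^{op})\Leftrightarrow N^{+}\in\mathcal{B}_C(R)$ and $M\in\mathcal{B}_C(R)\Leftrightarrow M^{+}\in\mathcal{A}_C(R^{op})$ (the paper's Proposition \ref{3.2}, proved with exactly the adjunction/finiteness isomorphisms and the compatibility of the two evaluation maps that you describe), and then feed the closure properties into \cite[Theorem 3.1]{HJ09}. The only difference is bookkeeping: where you re-derive closure under (co)products, summands and extensions and the membership $R\in\mathcal{A}_C(R^{op})$ by hand, the paper simply cites \cite[Proposition 4.2(a) and Theorem 6.2]{HW07} and \cite[Lemma 2.16]{GT12}.
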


As a consequence of Theorem \ref{1.1}, we get that the pair
$$(\mathcal{A}_C(R^{op}),\mathcal{A}_C(R^{op})^{\bot})$$
is a hereditary perfect cotorsion pair and $\mathcal{A}_C(R^{op})$ is covering and preenveloping in $\Mod R^{op}$,
where $\mathcal{A}_C(R^{op})^{\bot}$ is the right $\Ext$-orthogonal class of $\mathcal{A}_C(R^{op})$ (Corollary \ref{3.4}).
This result was proved in \cite[Theorem 3.11]{EH} when $R$ is a commutative noetherian ring and $_RC_S={_RC_R}$.

By Theorem \ref{1.1} and its symmetric result, we have that $\mathcal{B}_C(R)$ is preenveloping in $\Mod R$
and $\mathcal{A}_C(S)$ is preenveloping in $\Mod S$. Moreover, we prove the following

\begin{theorem}\label{1.2} {\rm (Theorem \ref{3.7}(2))}
If $\mathcal{B}_C(R)$ is enveloping in $\Mod R$, then $\mathcal{A}_C(S)$ is enveloping in $\Mod S$.
\end{theorem}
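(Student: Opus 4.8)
The plan is to exploit the Foxby equivalence attached to $_RC_S$: the adjoint pair $(C\otimes_S-,\Hom_R(C,-))$ between $\Mod S$ and $\Mod R$, with unit $\eta\colon\id\Rightarrow\Hom_R(C,C\otimes_S-)$ and counit $\varepsilon\colon C\otimes_S\Hom_R(C,-)\Rightarrow\id$. I shall use freely the standard facts: $\Hom_R(C,-)$ carries $\mathcal{B}_C(R)$ into $\mathcal{A}_C(S)$ and $C\otimes_S-$ carries $\mathcal{A}_C(S)$ into $\mathcal{B}_C(R)$; $\eta_X$ is an isomorphism for $X\in\mathcal{A}_C(S)$ and $\varepsilon_B$ is an isomorphism for $B\in\mathcal{B}_C(R)$; hence $C\otimes_S-\colon\mathcal{A}_C(S)\to\mathcal{B}_C(R)$ and $\Hom_R(C,-)\colon\mathcal{B}_C(R)\to\mathcal{A}_C(S)$ are mutually inverse equivalences, so a morphism $\phi$ in $\mathcal{A}_C(S)$ is an isomorphism if and only if $C\otimes_S\phi$ is.

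Now fix $M\in\Mod S$. Since $C\otimes_S M\in\Mod R$ and $\mathcal{B}_C(R)$ is enveloping by hypothesis, choose a $\mathcal{B}_C(R)$-envelope $g\colon C\otimes_S M\to B$, and put $f:=\Hom_R(C,g)\circ\eta_M\colon M\to\Hom_R(C,B)$, whose target lies in $\mathcal{A}_C(S)$. I claim $f$ is an $\mathcal{A}_C(S)$-envelope of $M$. For the preenvelope property, let $X\in\mathcal{A}_C(S)$ and $u\colon M\to X$; since $C\otimes_S X\in\mathcal{B}_C(R)$, the morphism $C\otimes_S u$ factors as $C\otimes_S u=w\circ g$ for some $w\colon B\to C\otimes_S X$. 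Applying $\Hom_R(C,-)$, composing on the right with $\eta_M$, and invoking the naturality square of $\eta$ at $u$ (which gives $\Hom_R(C,C\otimes_S u)\circ\eta_M=\eta_X\circ u$), one obtains $\eta_X^{-1}\circ\Hom_R(C,w)\circ f=u$; thus $v:=\eta_X^{-1}\circ\Hom_R(C,w)$ satisfies $vf=u$.

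For the envelope property, suppose $\phi\colon\Hom_R(C,B)\to\Hom_R(C,B)$ satisfies $\phi f=f$. Using naturality of $\varepsilon$ at $g$ and the triangle identity $\varepsilon_{C\otimes_S M}\circ(C\otimes_S\eta_M)=\id_{C\otimes_S M}$, one checks $\varepsilon_B\circ(C\otimes_S f)=g$. Applying $C\otimes_S-$ to $\phi f=f$ and composing with $\varepsilon_B$ then gives $\psi g=g$, where $\psi:=\varepsilon_B\circ(C\otimes_S\phi)\circ\varepsilon_B^{-1}$. Since $g$ is a $\mathcal{B}_C(R)$-envelope, $\psi$ is an automorphism, hence so is $C\otimes_S\phi$, and therefore $\phi$ is an automorphism because $C\otimes_S-$ restricts to an equivalence on $\mathcal{A}_C(S)$. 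Hence $f$ is an $\mathcal{A}_C(S)$-envelope, and $\mathcal{A}_C(S)$ is enveloping in $\Mod S$.

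The difficulty is entirely bookkeeping rather than conceptual: one must track the unit and counit carefully and check at each step that the modules appearing, namely $\Hom_R(C,B)$, $C\otimes_S X$, $\Hom_R(C,C\otimes_S X)$ and $C\otimes_S\Hom_R(C,B)$, lie in $\mathcal{A}_C(S)$ or $\mathcal{B}_C(R)$ as needed, so that the relevant components of $\eta$ and $\varepsilon$ are isomorphisms; the two naturality computations (for $\eta$ in the preenvelope step and for $\varepsilon$ in the envelope step) are where the argument actually lives. (One could instead quote the symmetric form of Theorem~\ref{1.1} to get the preenvelope property for free and only run the second half of the argument, but the construction above delivers both at once.)
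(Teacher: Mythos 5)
Your proposal is correct and is essentially the paper's own argument: the map $f=\Hom_R(C,g)\circ\eta_M$ is exactly the paper's $f_*\mu_N$, the preenvelope step is the same naturality-of-the-unit computation with $\eta_X$ invertible on $\mathcal{A}_C(S)$, and the minimality step is the paper's computation (there written elementwise with $\theta$ and $\mu$ and the identity $\theta_{C\otimes_SN}(1_C\otimes\mu_N)=\id$ from Wisbauer) recast in unit/counit language, ending with the same reflection of isomorphisms via $\mu_{B_*}$ (equivalently, the Foxby equivalence of \cite[Proposition 4.1]{HW07}). No gaps; only the phrasing differs.
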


Then we apply these results and their symmetric results to study the Auslander projective dimension of modules.
We obtain some criteria for computing the Auslander projective dimension of modules in $\Mod S$ (Theorem \ref{4.4}).
Furthermore, we get the following

\begin{theorem}\label{1.3} {\rm (Theorem \ref{4.10})}
If $_RC$ has an ultimately closed projective resolution, then
$$\mathcal{A}_C(S)={{C_S}^{\top}}={^{\bot}\mathcal{I}_C(S)},$$
where ${{C_S}^{\top}}$ is the $\Tor$-orthogonal class of $C_S$ and ${^{\bot}\mathcal{I}_C(S)}$ is
the left $\Ext$-orthogonal class of the subcategory $\mathcal{I}_C(S)$ of $\Mod S$ consisting of $C$-injective modules.
\end{theorem}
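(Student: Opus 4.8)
The plan is to establish the two equalities $\mathcal{A}_C(S) = {C_S}^{\top}$ and $\mathcal{A}_C(S) = {}^{\bot}\mathcal{I}_C(S)$ separately, in each case proving the two inclusions.

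First I would record the "easy" inclusions that hold without any hypothesis on $_RC$. By definition, $M\in\mathcal{A}_C(S)$ requires in particular $\Tor_{i\geq1}^R(C,M)=0$, so $\mathcal{A}_C(S)\subseteq {C_S}^{\top}$ is immediate. For the other pair, recall that every $C$-injective left $S$-module is of the form $\Hom_{\mathbb{Z}}(C,E)$ (or $\Hom_R(C,I)$ in the appropriate sense) and lies in $\mathcal{B}_C(R)$-type position; a standard adjointness/dimension-shifting argument shows that $M\in\mathcal{A}_C(S)$ forces $\Ext_S^{i\geq1}(M,-)$ to vanish on $\mathcal{I}_C(S)$, giving $\mathcal{A}_C(S)\subseteq {}^{\bot}\mathcal{I}_C(S)$. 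So the content is in the reverse inclusions, and this is where the hypothesis that $_RC$ has an ultimately closed projective resolution enters.

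The key step is the following: if $_RC$ has an ultimately closed projective resolution, then for any $M$ the vanishing of finitely many $\Tor_i^R(C,M)$ already forces the vanishing of all of them, and moreover controls the natural evaluation map $\mu_M\colon C\otimes_R\Hom_R(C,M)\to$ (or the relevant unit/counit of the adjunction between $C\otimes_R-$ and $\Hom_R(C,-)$) being an isomorphism. Concretely, "ultimately closed" means there is an $n$ such that the $n$-th syzygy of $_RC$ is a direct summand of a finite direct sum of lower syzygies; applying $-\otimes_R M$ and using a standard syzygy/summand manipulation, one shows $\Tor_{i}^R(C,M)=0$ for all $i\geq1$ as soon as it holds for $1\leq i\leq n$, and similarly that the derived-functor obstructions to $\mu_M$ being invertible collapse. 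Thus membership in ${C_S}^{\top}$ (resp. in ${}^{\bot}\mathcal{I}_C(S)$, after translating the Ext-vanishing against $\mathcal{I}_C(S)$ into the same $\Tor$-vanishing via the Hom-tensor adjunction and the fact that injective cogenerators detect everything) yields all the defining conditions of $\mathcal{A}_C(S)$.

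I expect the main obstacle to be the bookkeeping that turns "ultimately closed projective resolution" into the two statements I need: (i) finite $\Tor$-vanishing implies infinite $\Tor$-vanishing, and (ii) the unit/counit of the adjunction is an isomorphism under that same finite vanishing. Part (i) is the more delicate one, since one must carefully track how the direct-summand relation among syzygies propagates after tensoring and iterate it; part (ii) should then follow by a dimension-shift once (i) is in hand, together with the identification of ${}^{\bot}\mathcal{I}_C(S)$ with ${C_S}^{\top}$ via adjunction and faithful injective cogenerators. Once both reverse inclusions are secured, combining them with the trivial forward inclusions gives the chain $\mathcal{A}_C(S)={C_S}^{\top}={}^{\bot}\mathcal{I}_C(S)$.
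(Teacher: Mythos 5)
Your overall architecture (easy inclusions plus a hard inclusion where the hypothesis on $_RC$ enters through the unit $\mu_N$) matches the paper, but two of your load-bearing steps do not hold up as stated. First, your key claim (i) -- that ultimate closedness lets finite $\Tor$-vanishing propagate to all degrees -- is both unnecessary and wrongly derived. It is unnecessary because membership in ${C_S}^{\top}$ already means $\Tor^S_{\geq 1}(C,N)=0$ by definition, and the identification ${C_S}^{\top}={}^{\bot}\mathcal{I}_C(S)$ is unconditional (the paper's Lemma \ref{4.3}, via $\Hom_R(\Tor^S_i(C,-),Q)\cong\Ext^i_S(-,Q_*)$ and $\mathcal{I}_C(S)=\Prod_S Q_*$), so no hypothesis on $_RC$ is needed for that equality. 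It is wrongly derived because the ultimately closed resolution is a resolution of $_RC$ as a \emph{left $R$-module}, while the relevant Tor groups are $\Tor^S_i(C,N)$ taken over $S$ (your ``$\Tor^R_i(C,M)$'' is a side slip): you cannot tensor that $R$-projective resolution with a left $S$-module, so the ``apply $-\otimes M$ and track summands of syzygies'' manipulation has no meaning here. In the paper the hypothesis acts on the $\Ext_R(C,-)$ side: one takes a minimal flat resolution of $N$ in $\Mod S$, identifies $\Ker\mu_N$ and $\Coker\mu_N$ with $\Ext_R^{n+1}$ and $\Ext_R^{n+2}(C,\Ker(1_C\otimes f_{n+1}))$ (Lemma \ref{4.7}, using \cite[Proposition 3.2]{TH}), and uses the summand relation among the syzygies of $_RC$ to kill exactly those two Ext groups (Proposition \ref{4.8}). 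Your ``dimension shift once (i) is in hand'' does not produce this.

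Second, and more seriously, even granting that $\mu_N$ is an isomorphism, you have only verified conditions (a1) and (a3) of the Auslander class; the condition $\Ext_R^{\geq 1}(C,C\otimes_SN)=0$ is never addressed, and it is not automatic from the other two. This is precisely where the bulk of the paper's proof of Theorem \ref{4.10} lies: starting from $N\in{}^{\bot}\mathcal{I}_C(S)$ with $\mu_N$ invertible, one uses a $\mathcal{B}_C(R)$-preenvelope of $C\otimes_SN$ (monic because $\mathcal{B}_C(R)$ is injectively coresolving) and Theorem \ref{3.7}(1) to obtain a \emph{monic} $\mathcal{A}_C(S)$-preenvelope of $N$, iterates to build a $\Hom_S(-,\mathcal{A}_C(S))$-exact coresolution by modules in $\mathcal{A}_C(S)$, splices in $\mathcal{I}_C(S)$-coresolutions of each term (\cite[Corollary 3.9]{Hu}), and finally invokes the coresolution characterization of $\mathcal{A}_C(S)$ from \cite[Theorem 3.11(1)]{TH} to conclude $N\in\mathcal{A}_C(S)$. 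Without this (or some substitute argument for $\Ext_R^{\geq 1}(C,C\otimes_SN)=0$), the assertion that the Tor-vanishing ``yields all the defining conditions of $\mathcal{A}_C(S)$'' is a genuine gap, not bookkeeping.
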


As a consequence, we have that if $_RC$ has an ultimately closed projective resolution,
then the projective dimension of $C_S$ is at most $n$ if and only if the Auslander projective dimension of
any module in $\Mod S$ is at most $n$ (Corollary \ref{4.11}).

\section{Preliminaries}

In this paper, all rings are associative with identities. Let $R$ be a ring. We use $\Mod R$ to denote the category
of left $R$-modules and all subcategories of $\Mod R$ are full and closed under isomorphisms.
For a subcategory $\mathscr{X}$ of $\Mod R$, we write
$${^\perp{\mathscr{X}}}:=\{A\in\Mod R\mid\operatorname{Ext}^{\geq 1}_{R}(A,X)=0 \mbox{ for any}\ X\in \mathscr{X}\},$$
$${{\mathscr{X}}^\perp}:=\{A\in\Mod R\mid\operatorname{Ext}^{\geq 1}_{R}(X,A)=0 \mbox{ for any}\ X\in \mathscr{X}\},$$
$${^{\perp_1}{\mathscr{X}}}:=\{A\in\Mod R\mid\operatorname{Ext}^{1}_{R}(A,X)=0 \mbox{ for any}\ X\in \mathscr{X}\},$$
$${{\mathscr{X}}^{\perp_1}}:=\{A\in\Mod R\mid\operatorname{Ext}^{1}_{R}(X,A)=0 \mbox{ for any}\ X\in \mathscr{X}\}.$$
For subcategories $\mathscr{X},\mathscr{Y}$ of $\Mod R$, we write $\mathscr{X}\perp\mathscr{Y}$ if
$\operatorname{Ext}^{\geq 1}_{R}(X,Y)=0$ for any $X\in \mathscr{X}$ and $Y\in \mathscr{Y}$.

\begin{definition} \label{2.1}
{\rm (\cite{E1,EJ00})
Let $\mathscr{X}\subseteq\mathscr{Y}$ be
subcategories of $\Mod R$. A homomorphism $f: X\to Y$ in
$\Mod R$ with $X\in\mathscr{X}$ and $Y\in \mathscr{Y}$ is called an {\bf $\mathscr{X}$-precover} of $Y$
if $\Hom_{R}(X^{'},f)$ is epic for any $X^{'}\in\mathscr{X}$; and $f$ is called {\bf right minimal}
if an endomorphism $h:X\to X$ is an automorphism whenever $f=fh$.
An {\bf $\mathscr{X}$-precover} $f: X\to Y$ is called an {\bf $\mathscr{X}$-cover} of $Y$
if it is right minimal. The subcategory $\mathscr{X}$ is called {\bf (pre)covering}
in $\mathscr{Y}$ if any object in $\mathscr{Y}$ admits an $\mathscr{X}$-(pre)cover.
Dually, the notions of an {\bf $\mathscr{X}$-(pre)envelope}, a {\bf left minimal homomorphism}
and a {\bf (pre)enveloping subcategory} are defined.}
\end{definition}

\begin{definition}\label{2.2}
{\rm (\cite{EJ00, GT12})
Let $\mathscr{U},\mathscr{V}$ be subcategories of $\Mod R$.
\begin{enumerate}
\item[(1)] The pair $(\mathscr{U},\mathscr{V})$ is called a {\bf cotorsion pair} in $\Mod R$ if
$\mathscr{U}={^{\bot_1}\mathscr{V}}$ and $\mathscr{V}={\mathscr{U}^{\bot_1}}$.
\item[(2)] A cotorsion pair $(\mathscr{U},\mathscr{V})$ is called {\bf perfect} if $\mathscr{U}$ is covering and $\mathscr{V}$
is enveloping in $\Mod R$.
\item[(3)] A cotorsion pair $(\mathscr{U},\mathscr{V})$ is called {\bf hereditary} if one of the following equivalent
conditions is satisfied.
\begin{enumerate}
\item[(3.1)] $\mathscr{U}\perp \mathscr{V}$.
\item[(3.2)] $\mathscr{U}$ is projectively resolving in the sense that $\mathscr{U}$ contains all projective modules
in $\Mod R$, $\mathscr{U}$ is closed under extensions and kernels of epimorphisms.
\item[(3.3)] $\mathscr{V}$ is injectively coresolving in the sense that $\mathscr{V}$ contains all injective modules
in $\Mod R$, $\mathscr{V}$ is closed under extensions and cokernels of monomorphisms.
\end{enumerate}
\end{enumerate}}
\end{definition}

Set $(-)^+:=\Hom_{\mathbb{Z}}(-,\mathbb{Q}/\mathbb{Z})$, where $\mathbb{Z}$ is the additive group of integers and $\mathbb{Q}$
is the additive group of rational numbers. The following is the definition of duality pairs (cf. \cite{EI,HJ09}).

\begin{definition}\label{2.3}
{\rm Let $\mathscr{X}$ and $\mathscr{Y}$ be subcategories of $\Mod R$ and $\Mod R^{op}$ respectively.
\begin{enumerate}
\item[(1)] The pair ($\mathscr{X},\mathscr{Y}$) is called a {\bf duality pair} if the following conditions are satisfied.
\begin{enumerate}
\item[(1.1)] For a module $X\in\Mod R$, $X\in\mathscr{X}$  if and only if $X^{+}\in \mathscr{Y}$.
\item[(1.2)] $\mathscr{Y}$ is closed under direct summands and finite direct sums.
\end{enumerate}
\item[(2)] A duality pair ($\mathscr{X},\mathscr{Y}$) is called {\bf (co)product-closed} if $\mathscr{X}$ is closed under
(co)products.
\item[(3)] A duality pair ($\mathscr{X},\mathscr{Y}$) is called {\bf perfect} if it is coproduct-closed,
$_RR\in\mathscr{X}$ and $\mathscr{X}$ is closed under extensions.
\end{enumerate}}
\end{definition}

We also recall the following remarkable result.

\begin{lemma}\label{2.4}
{\rm (\cite[p.7, Theorem]{EI} and \cite[Theorem 3.1]{HJ09})}
Let $\mathscr{X}$ and $\mathscr{Y}$ be subcategories of $\Mod R$ and $\Mod R^{op}$ respectively.
If $(\mathscr{X},\mathscr{Y})$ is a duality pair, then the following assertions hold true.
\begin{enumerate}
\item[(1)] If $(\mathscr{X},\mathscr{Y})$ is coproduct-closed, then $\mathscr{X}$ is covering.
\item[(2)] If $(\mathscr{X},\mathscr{Y})$ is product-closed, then $\mathscr{X}$ is preenveloping.
\item[(3)] If $(\mathscr{X},\mathscr{Y})$ is perfect, then $(\mathscr{X},\mathscr{X}^{\perp})$ is a perfect cotorsion pair.
\end{enumerate}
\end{lemma}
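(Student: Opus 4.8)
The plan is to deduce, from the definition of a duality pair alone, a package of closure properties for $\mathscr{X}$, and then to invoke the standard existence theorems for covers and envelopes. First I would record the behaviour of the character functor $(-)^+$ on pure-exact sequences: a sequence $0\to A\to B\to C\to 0$ in $\Mod R$ is pure exact if and only if its image $0\to C^+\to B^+\to A^+\to 0$ under $(-)^+$ is split exact. Hence, whenever $B\in\mathscr{X}$ — so that $B^+\in\mathscr{Y}$ by (1.1) — both $A^+$ and $C^+$ are direct summands of $B^+$ and therefore lie in $\mathscr{Y}$ by (1.2); applying (1.1) again puts $A$ and $C$ in $\mathscr{X}$. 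Feeding a split exact sequence into the same argument shows in particular that $\mathscr{X}$ is closed under direct summands. Thus, with no hypothesis beyond ``duality pair'', $\mathscr{X}$ is closed under pure submodules, pure quotients and direct summands.

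From this I would extract two consequences. Since every direct limit $\varinjlim M_i$ is a pure-epimorphic image of $\bigoplus_i M_i$, closure under coproducts together with closure under pure quotients forces $\mathscr{X}$ to be closed under direct limits. Independently, closure under pure submodules yields an accessibility bound: there is a cardinal $\kappa$ (depending on $R$, and on the module at hand where that is needed) such that any subset of a module of size below $\kappa$ lies inside a pure submodule of size below $\kappa$; so each $M\in\mathscr{X}$ is the directed union of ``small'' pure submodules, all of which lie in $\mathscr{X}$, and $\mathscr{X}$ possesses, up to isomorphism, a \emph{set} $\mathcal{S}$ of small members through which everything filters.

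With this in hand the three assertions become applications of known machinery. For (1): a class closed under direct limits and pure submodules is covering — the set $\mathcal{S}$ and closure under coproducts produce an $\mathscr{X}$-precover of any $M$ by the usual construction (a coproduct of one copy of each homomorphism from a member of $\mathcal{S}$ into $M$), and closure under direct limits then upgrades precovers to covers by Enochs' theorem. For (2): given $M$, every homomorphism $f\colon M\to X$ with $X\in\mathscr{X}$ factors through a small pure submodule of $X$ containing $f(M)$, which is again in $\mathscr{X}$ and isomorphic to a member of $\mathcal{S}$; hence the canonical map $M\to\prod S$, the product taken over all pairs consisting of a member $S$ of $\mathcal{S}$ and a homomorphism $M\to S$, is an $\mathscr{X}$-preenvelope, closure under products being used to keep the target in $\mathscr{X}$. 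For (3): $\mathscr{X}$ is covering by (1), and the pure-closure properties make $\mathscr{X}$ a Kaplansky class — for $M\in\mathscr{X}$ and a small submodule $N$, a pure closure $N'$ of $N$ can be taken small, lies in $\mathscr{X}$ (pure submodule) and has $M/N'\in\mathscr{X}$ (pure quotient); since moreover $_RR\in\mathscr{X}$ and $\mathscr{X}$ is closed under extensions, the standard Kaplansky-class/small-object machinery produces a complete cotorsion pair $(\mathscr{X},\mathscr{X}^{\perp})$, which together with ``$\mathscr{X}$ covering'' is a perfect cotorsion pair.

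The step I expect to be the real obstacle is the accessibility bound in the second paragraph: one has to pin down the cardinal $\kappa$ and check carefully that closure under pure submodules alone makes $\mathscr{X}$ accessible enough (in the sense of Kaplansky classes) to drive \emph{all} of the precover construction, the upgrade to covers, the preenvelope construction and the completeness argument in (3) uniformly. Everything else reduces to the formal closure bookkeeping of the first paragraph together with off-the-shelf theorems on the existence of (pre)covers, (pre)envelopes and complete cotorsion pairs.
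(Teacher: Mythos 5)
The paper itself gives no proof of Lemma \ref{2.4}: it is quoted from \cite{EI} and \cite[Theorem 3.1]{HJ09}, so the relevant comparison is with Holm--J{\o}rgensen's original argument. Your route is exactly theirs: use the fact that $(-)^+$ turns pure exact sequences into split exact ones to show that $\mathscr{X}$ is closed under pure submodules, pure quotients and direct summands, deduce closure under direct limits from coproduct-closedness (a direct limit being a pure epimorphic image of the coproduct), and then feed these closure properties into known existence machinery (El Bashir for covers, Rada--Saor\'{\i}n for preenvelopes, the Kaplansky-class theorem of Enochs--L\'opez-Ramos for the perfect cotorsion pair). Parts (2) and (3) of your sketch are correct as written: the factor-through-a-small-pure-submodule argument for preenvelopes and the Kaplansky-class verification (pure submodule in $\mathscr{X}$, pure quotient in $\mathscr{X}$) are the standard ones.

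The weak point is the precover construction in (1), and it is precisely the step you flag. Writing $X\in\mathscr{X}$ as a directed union of small pure submodules $X_\alpha\in\mathscr{X}$ does not make the canonical map $\bigoplus_{S\in\mathcal{S}}\bigoplus_{f\in\Hom_R(S,M)}S\to M$ an $\mathscr{X}$-precover: a homomorphism $g\colon X\to M$ restricts to each $X_\alpha$, and each restriction factors through this coproduct, but the factorizations need not be compatible, so $g$ itself need not factor. The factorization you actually need is through a small pure \emph{quotient}: given $g\colon X\to M$, the purification argument of Bican--El Bashir--Enochs \cite{BBE} produces a pure submodule $K\subseteq\Ker g$ of $X$ with $|X/K|$ bounded by a cardinal depending only on $|M|$ and $|R|$; then $X/K\in\mathscr{X}$ because $\mathscr{X}$ is closed under pure quotients, and $g$ factors through $X/K$. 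With that, your coproduct over a set of representatives of such small modules is a precover, and Enochs' theorem (a precovering class closed under direct limits is covering) finishes; equivalently one can simply quote El Bashir's theorem that a class closed under pure epimorphic images and direct sums is covering, which is what \cite{HJ09} does. So the gap is real but repairable from closure properties you have already established; the pure-submodule accessibility bound you emphasize is the right tool for the preenvelope part (2) and the Kaplansky-class check in (3), but not for the precover in (1).
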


\begin{definition} \label{2.5}
{\rm (\cite{HW07}).
Let $R$ and $S$ be rings. An ($R,S$)-bimodule $_RC_S$ is called
{\bf semidualizing} if the following conditions are satisfied.
\begin{enumerate}
\item[(a1)] $_RC$ admits a degreewise finite $R$-projective resolution.
\item[(a2)] $C_S$ admits a degreewise finite $S$-projective resolution.
\item[(b1)] The homothety map $_RR_R\stackrel{_R\gamma}{\rightarrow} \Hom_{S^{op}}(C,C)$ is an isomorphism.
\item[(b2)] The homothety map $_SS_S\stackrel{\gamma_S}{\rightarrow} \Hom_{R}(C,C)$ is an isomorphism.
\item[(c1)] $\Ext_{R}^{\geq 1}(C,C)=0$.
\item[(c2)] $\Ext_{S^{op}}^{\geq 1}(C,C)=0$.
\end{enumerate}}
\end{definition}

Wakamatsu in \cite{W1} introduced and studied the so-called {\bf generalized tilting modules},
which are usually called {\bf Wakamatsu tilting modules}, see \cite{BR, MR}. Note that
a bimodule $_RC_S$ is semidualizing if and only if it is Wakamatsu tilting (\cite[Corollary 3.2]{W3}).
Examples of semidualizing bimodules are referred to \cite{HW07,W2}.

\section{Duality pairs}

In this section, $R$ and $S$ are arbitrary rings and $_RC_S$ is a semidualizing bimodule.
We write $(-)_*:=\Hom(C,-)$ and
$${{_RC}^{\bot}}:=\{M\in\Mod R\mid\Ext^{\geq 1}_{R}(C,M)=0\}\ \text{and}\
{{C_S}^{\bot}}:=\{B\in\Mod S^{op}\mid\Ext^{\geq 1}_{S^{op}}(C,B)=0\},$$
$${^{\top}{_RC}}:=\{N\in\Mod R^{op}\mid\Tor_{\geq 1}^{R}(N,C)=0\}\ \text{and}\
{{C_S}^{\top}}:=\{A\in\Mod S\mid\Tor_{\geq 1}^{S}(C,A)=0\}.$$

\begin{definition} \label{3.1}
{\rm (\cite{HW07})
\begin{enumerate}
\item[(1)] The {\bf Auslander class} $\mathcal{A}_{C}(R^{op})$ with respect to $C$ consists of all modules $N$
in $\Mod R^{op}$ satisfying the following conditions.
\begin{enumerate}
\item[(a1)] $N\in{^{\top}{_RC}}$.
\item[(a2)] $N\otimes _{R}C\in{{C_S}^{\perp}}$.
\item[(a3)] The canonical valuation homomorphism
$$\mu_N:N\rightarrow (N\otimes_RC)_*$$
defined by $\mu_N(x)(c)=x\otimes c$ for any $x\in N$ and $c\in C$ is an isomorphism in $\Mod R^{op}$.
\end{enumerate}
\item[(2)] The {\bf Bass class} $\mathcal{B}_C(R)$ with respect to $C$ consists of all modules $M$
in $\Mod R$ satisfying the following conditions.
\begin{enumerate}
\item[(b1)] $M\in{_RC^{\perp}}$.
\item[(b2)] $M_*\in{{C_S}^{\top}}$.
\item[(b3)] The canonical valuation homomorphism
$$\theta_M:C\otimes_SM_*\rightarrow M$$
defined by $\theta_M(c\otimes f)=f(c)$ for any $c\in C$ and $f\in M_*$ is an isomorphism in $\Mod R$.
\end{enumerate}
\item[(3)] The {\bf Auslander class} $\mathcal{A}_C(S)$ in $\Mod S$ and the {\bf Bass class} $\mathcal{B}_C(S^{op})$
in $\Mod S^{op}$ are defined symmetrically.
\end{enumerate}}
\end{definition}

The following result is crucial. From its proof, it is known that the conditions in the definitions
of $\mathcal{A}_{C}(R^{op})$ and $\mathcal{B}_C(R)$ are dual item by item.

\begin{proposition}\label{3.2}
\begin{enumerate}
\item[]
\item[(1)] For a module $N\in\Mod R^{op}$, $N\in\mathcal{A}_{C}(R^{op})$ if and only if $N^+\in\mathcal{B}_C(R)$.
\item[(2)] For a module $M\in\Mod R$, $M\in\mathcal{B}_C(R)$ if and only if $M^+\in\mathcal{A}_{C}(R^{op})$.
\end{enumerate}
\end{proposition}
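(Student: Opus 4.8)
The plan is to prove the two stated biconditionals by showing that the character dual $(-)^{+}$ matches the three defining conditions of $\mathcal{A}_C(R^{op})$ with the three defining conditions of $\mathcal{B}_C(R)$ one at a time. Since $\mathbb{Q}/\mathbb{Z}$ is a faithfully injective $\mathbb{Z}$-module, $(-)^{+}$ is an exact contravariant functor with $X^{+}=0$ only when $X=0$; in particular it reflects isomorphisms. So it suffices to verify that a module $N\in\Mod R^{op}$ satisfies condition (a$k$) of Definition \ref{3.1} if and only if $N^{+}$ satisfies condition (b$k$), and that a module $M\in\Mod R$ satisfies (b$k$) if and only if $M^{+}$ satisfies (a$k$), for $k=1,2,3$; statements (1) and (2) then follow by conjunction over $k$.

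Everything rests on a short list of natural isomorphisms, and this is the only place the hypotheses on $_RC_S$ enter. Hom--tensor adjunction gives, with no hypothesis on $C$, natural isomorphisms $(X\otimes_R C)^{+}\cong\Hom_R(C,X^{+})$ for $X\in\Mod R^{op}$ and $(C\otimes_S Y)^{+}\cong\Hom_{S^{op}}(C,Y^{+})$ for $Y\in\Mod S$. In the opposite direction, since $_RC$ is finitely presented (indeed it admits a degreewise finite projective resolution, by Definition \ref{2.5}(a1)), the natural map $M^{+}\otimes_R C\to\Hom_R(C,M)^{+}$ sending $\varphi\otimes c$ to $(f\mapsto\varphi(f(c)))$ is an isomorphism for every $M\in\Mod R$: apply $(-)^{+}$ and $(-)\otimes_R C$ to a finite presentation $P_1\to P_0\to C\to 0$, observe that the map is an isomorphism on the finitely generated projectives $P_0,P_1$, and invoke the five lemma; symmetrically $\Hom_{S^{op}}(C,Y)^{+}\cong C\otimes_S Y^{+}$ for $Y\in\Mod S^{op}$, using that $C_S$ is finitely presented. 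Inserting the degreewise finite projective resolutions of $_RC$ and of $C_S$ into these isomorphisms and passing to homology then yields, for all $i\geq 0$, $\Tor^R_i(N,C)^{+}\cong\Ext^i_R(C,N^{+})$, $\Ext^i_R(C,M)^{+}\cong\Tor^R_i(M^{+},C)$, $\Tor^S_i(C,X)^{+}\cong\Ext^i_{S^{op}}(C,X^{+})$ and $\Ext^i_{S^{op}}(C,Y)^{+}\cong\Tor^S_i(C,Y^{+})$.

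Granting these, each condition translates mechanically. Condition (a1) for $N$ reads $\Tor^R_{\geq 1}(N,C)=0$, which by $\Tor^R_i(N,C)^{+}\cong\Ext^i_R(C,N^{+})$ and faithfulness of $(-)^{+}$ is equivalent to $\Ext^{\geq 1}_R(C,N^{+})=0$, that is, to (b1) for $N^{+}$; dually (b1) for $M$ is equivalent via $\Ext^i_R(C,M)^{+}\cong\Tor^R_i(M^{+},C)$ to (a1) for $M^{+}$. For the second condition, the adjunction isomorphism identifies $(N\otimes_R C)^{+}$ with $(N^{+})_*$, so (a2), that $N\otimes_R C\in{{C_S}^{\perp}}$, becomes after dualizing (via $\Ext^i_{S^{op}}(C,-)^{+}\cong\Tor^S_i(C,(-)^{+})$) the statement $\Tor^S_{\geq 1}(C,(N^{+})_*)=0$, i.e.\ (b2) for $N^{+}$; conversely (b2) for $M$, that $\Tor^S_{\geq 1}(C,M_*)=0$, is equivalent via $\Tor^S_i(C,M_*)^{+}\cong\Ext^i_{S^{op}}(C,(M_*)^{+})$ together with $(M_*)^{+}=\Hom_R(C,M)^{+}\cong M^{+}\otimes_R C$ to $\Ext^{\geq 1}_{S^{op}}(C,M^{+}\otimes_R C)=0$, i.e.\ (a2) for $M^{+}$. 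For the third condition, apply $(-)^{+}$ to $\mu_N\colon N\to(N\otimes_R C)_*=\Hom_{S^{op}}(C,N\otimes_R C)$ and compose with the natural isomorphisms $\Hom_{S^{op}}(C,N\otimes_R C)^{+}\cong C\otimes_S(N\otimes_R C)^{+}\cong C\otimes_S(N^{+})_*$ to obtain a homomorphism $C\otimes_S(N^{+})_*\to N^{+}$; one checks this homomorphism is exactly $\theta_{N^{+}}$, so that (a3) holds for $N$ if and only if (b3) holds for $N^{+}$. Dualizing $\theta_M$ and using $(C\otimes_S M_*)^{+}\cong\Hom_{S^{op}}(C,(M_*)^{+})\cong\Hom_{S^{op}}(C,M^{+}\otimes_R C)$ recovers $\mu_{M^{+}}$ in the same fashion, giving (b3) $\Leftrightarrow$ (a3) for $M$. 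Conjoining over $k=1,2,3$ proves (1) and (2).

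The step I expect to cost the most care is the identification inside the third condition: checking by an explicit element computation that, under the natural isomorphisms above, $(\mu_N)^{+}$ becomes the valuation map $\theta_{N^{+}}$ (and dually $(\theta_M)^{+}$ becomes $\mu_{M^{+}}$). This is not conceptually hard, but it forces one to track every left/right/opposite module structure and every adjunction unit at once; apart from that bookkeeping, the single essential input is the finiteness of $_RC_S$, which is precisely what promotes the $\Hom$--$\otimes$ comparison maps from natural transformations to isomorphisms.
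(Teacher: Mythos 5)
Your proposal is correct and takes essentially the same route as the paper: a condition-by-condition translation under $(-)^+$ using the standard character-module isomorphisms (which the paper simply cites as \cite[Lemma 2.16]{GT12} instead of re-deriving from adjunction and the degreewise finite resolutions), together with the commutative squares identifying $(\mu_N)^+$ with $\theta_{N^+}$ and $(\theta_M)^+$ with $\mu_{M^+}$, and the fact that $(-)^+$ is exact and faithful, hence reflects isomorphisms and vanishing. The element-level verification you defer in the third condition is precisely the computation the paper writes out explicitly, so nothing essential is missing.
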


\begin{proof}
(1) Let $N\in \Mod R^{op}$. Then we have the following

(a) \begin{align*}
&\ \ \ \ \ \  \ \ \ \ N\in{^{\top}{_RC}}\\
&\ \ \ \ \ \Leftrightarrow \Tor_{\geq 1}^R(N,C)=0\\
&\ \ \ \ \ \Leftrightarrow [\Tor_{\geq 1}^R(N,C)]^+=0\\
&\ \ \ \ \ \Leftrightarrow \Ext^{\geq 1}_R(C,N^+)=0\ \text{(by \cite[Lemma 2.16(b)]{GT12})}\\
&\ \ \ \ \ \Leftrightarrow N^+\in{_RC^{\perp}}.
\end{align*}

(b) \begin{align*}
&\ \ \ \ \ \  \ \ \ \ N\otimes _{R}C\in{{C_S}^{\perp}}\\
&\ \ \ \ \ \Leftrightarrow \Ext_{S^{op}}^{\geq 1}(C,N\otimes_RC)=0\\
&\ \ \ \ \ \Leftrightarrow [\Ext_{S^{op}}^{\geq 1}(C,N\otimes_RC)]^+=0\\
&\ \ \ \ \ \Leftrightarrow \Tor_{\geq 1}^S(C,(N\otimes_RC)^+)=0\ \text{(by \cite[Lemma 2.16(d)]{GT12})}\\
&\ \ \ \ \ \Leftrightarrow \Tor_{\geq 1}^S(C,(N^+)_*)=0\ \text{(by \cite[Lemma 2.16(a)]{GT12})}\\
&\ \ \ \ \ \Leftrightarrow (N^+)_*\in{{C_S}^{\top}}.
\end{align*}

(c) By \cite[Lemma 2.16(c)]{GT12}, the canonical valuation homomorphism
$$\alpha:C\otimes_S(N\otimes_RC)^+ \to [\Hom_{S^{op}}(C,N\otimes_RC)]^+$$
defined by $\alpha(c\otimes g)(f)=gf(c)$ for any $c\in C$, $g\in (N\otimes_RC)^+$ and $f\in\Hom_{S^{op}}(C,N\otimes_RC)$
is an isomorphism in $\Mod R$. By \cite[Lemma 2.16(a)]{GT12}, the canonical valuation homomorphism
$$\beta:(N\otimes_RC)^+\to \Hom_R(C,N^+)$$
defined by $\beta(g)(c)(x)=g(x\otimes c)$ for any $g\in (N\otimes_RC)^+$, $c\in C$ and $x\in N$
is an isomorphism in $\Mod S$. So $$1_C\otimes\beta:C\otimes_S(N\otimes_RC)^+\to C\otimes_S\Hom_R(C,N^+)$$
via $(1_C\otimes\beta)(c\otimes g)=c\otimes\beta(g)$ for any $c\in C$ and $g\in (N\otimes_RC)^+$ is
an isomorphism in $\Mod R$.

Consider the following diagram
\begin{gather*}
\begin{split}
\xymatrix{
& C\otimes_S(N\otimes_RC)^+ \ar[rr]^{\alpha} \ar [d]^{1_C\otimes\beta} && [\Hom_{S^{op}}(C,N\otimes_RC)]^+ \ar [d]^{(\mu_N)^+} \\
& C\otimes_S\Hom_R(C,N^+) \ar[rr]^{\theta_{N^+}} &&N^+,}
\end{split}
\end{gather*}
where
$$(\mu_N)^+:[\Hom_{S^{op}}(C,N\otimes_RC)]^+\to N^+$$
via $(\mu_N)^+(f^{'})=f^{'}\mu_N$ for any $f^{'}\in [\Hom_{S^{op}}(C,N\otimes_RC)]^+$ is a natural homomorphism in $\Mod R$,
and
$$\theta_{N^+}:C\otimes_S\Hom_R(C,N^+)\to N^+$$
defined by $\theta_{N^+}(c\otimes f^{''})=f^{''}(c)$ for any $c\in C$ and $f^{''}\in\Hom_R(C,N^+)$
is a canonical valuation homomorphism in $\Mod R$. Then for any $c\in C$, $g\in(N\otimes_RC)^+$ and $x\in N$, we have
$$(\mu_N)^+\alpha(c\otimes g)(x)=\alpha(c\otimes g)\mu_N(x)=g\mu_N(x)(c)=g(x\otimes c)$$
$$\theta_{N^+}(1_C\otimes\beta)(c\otimes g)(x)=\theta_{N^+}(c\otimes \beta(g))(x)=\beta(g)(c)(x)=g(x\otimes c),$$
Thus $$(\mu_N)^+\alpha=\theta_{N^+}(1_C\otimes\beta),$$
and therefore $\mu_N$ is an isomorphism $\Leftrightarrow$ $(\mu_N)^+$ is an isomorphism
$\Leftrightarrow$ $\theta_{N^+}$ is an isomorphism.

We conclude that $N\in\mathcal{A}_{C}(R^{op})\Leftrightarrow N^+\in\mathcal{B}_C(R)$.

(2) Let $M\in \Mod R$. Then we have the following

(a) \begin{align*}
&\ \ \ \ \ \  \ \ \ \ M\in{_RC^{\perp}}\\
&\ \ \ \ \ \Leftrightarrow \Ext^{\geq 1}_R(C,M)=0\\
&\ \ \ \ \ \Leftrightarrow [\Ext^{\geq 1}_R(C,M)]^+=0\\
&\ \ \ \ \ \Leftrightarrow \Tor_{\geq 1}^R(M^+,C)=0\ \text{(by \cite[Lemma 2.16(d)]{GT12})}\\
&\ \ \ \ \ \Leftrightarrow M^+\in{^{\top}{_RC}}.
\end{align*}

(b) \begin{align*}
&\ \ \ \ \ \  \ \ \ \ M_*\in{{C_S}^{\top}}\\
&\ \ \ \ \ \Leftrightarrow \Tor_{\geq 1}^S(C,M_*)=0\\
&\ \ \ \ \ \Leftrightarrow [\Tor_{\geq 1}^S(C,M_*)]^+=0\\
&\ \ \ \ \ \Leftrightarrow \Ext_{S^{op}}^{\geq 1}(C,(M_*)^+)=0\ \text{(by \cite[Lemma 2.16(b)]{GT12})}\\
&\ \ \ \ \ \Leftrightarrow \Ext_{S^{op}}^{\geq 1}(C,M^+\otimes_RC)=0\ \text{(by \cite[Lemma 2.16(c)]{GT12})}\\
&\ \ \ \ \ \Leftrightarrow M^+\otimes_RC\in{{C_S}^{\perp}}.
\end{align*}

(c) By \cite[Lemma 2.16(a)]{GT12}, the canonical valuation homomorphism
$$\tau:[C\otimes_S\Hom_R(C,M)]^+\to \Hom_{S^{op}}(C,[\Hom_R(C,M)]^+)$$
defined by $\tau(g^{'})(c)(f)=g^{'}(c\otimes f)$ for any $g^{'}\in [C\otimes_S\Hom_R(C,M)]^+$, $c\in C$ and $f\in\Hom_R(C,M)$
is an isomorphism in $\Mod R^{op}$. By \cite[Lemma 2.16(c)]{GT12}, the canonical valuation homomorphism
$$\sigma:M^+\otimes_RC\to [\Hom_R(C,M)]^+$$
defined by $\sigma(g\otimes c)(f)=gf(c)$ for any $g\in M^+$, $c\in C$ and $f\in\Hom_R(C,M)$
is an isomorphism in $\Mod S^{op}$. So $$\Hom_{S^{op}}(C,\sigma):\Hom_{S^{op}}(C,M^+\otimes_RC)\to \Hom_{S^{op}}(C,[\Hom_R(C,M)]^+)$$
via $\Hom_{S^{op}}(C,\sigma)(g^{''})=\sigma g^{''}$ for any $g^{''}\in \Hom_{S^{op}}(C,M^+\otimes_RC)$ is
an isomorphism in $\Mod R^{op}$.

Consider the following diagram
\begin{gather*}
\begin{split}
\xymatrix{
& M^+ \ar[rr]^{(\theta_M)^+} \ar [d]^{\mu_{M^+}} && [C\otimes_S\Hom_R(C,M)]^+ \ar [d]^{\tau} \\
& \Hom_{S^{op}}(C,M^+\otimes_RC) \ar[rr]^{\Hom_{S^{op}}(C,\sigma)} &&\Hom_{S^{op}}(C,[\Hom_R(C,M)]^+),}
\end{split}
\end{gather*}
where
$$(\theta_M)^+:M^+\to [C\otimes_S\Hom_R(C,M)]^+$$
via $(\theta_M)^+(g)=g\theta_M$ for any $g\in M^+$ is a natural homomorphism in $\Mod R^{op}$, and
$$\mu_{M^+}:M^+\to \Hom_{S^{op}}(C,M^+\otimes_RC)$$ defined by $\mu_{M^+}(g)(c)=g\otimes c$ for any $g\in M^+$ and $c\in C$
is a canonical valuation homomorphism in $\Mod R^{op}$. Then for any $g\in M^+$, $c\in C$ and $f\in\Hom_R(C,M)$, we have
$$\tau(\theta_M)^+(g)(c)(f)=(\theta_M)^+(g)(c\otimes f)=g\theta_M(c\otimes f)=gf(c),$$
$$\Hom_{S^{op}}(C,\sigma)\mu_{M^+}(g)(c)(f)=\sigma\mu_{M^+}(g)(c)(f)=\sigma(g\otimes c)(f)=gf(c),$$
Thus $$\tau(\theta_M)^+=\Hom_{S^{op}}(C,\sigma)\mu_{M^+},$$
and therefore $\theta_M$ is an isomorphism $\Leftrightarrow$ $(\theta_M)^+$ is an isomorphism
$\Leftrightarrow$ $\mu_{M^+}$ is an isomorphism.

We conclude that $M\in\mathcal{B}_C(R)\Leftrightarrow M^+\in\mathcal{A}_{C}(R^{op})$.
\end{proof}

As a consequence, we get the following

\begin{theorem}\label{3.3}
\begin{enumerate}
\item[]
\item[(1)] The pair
$$(\mathcal{A}_C(R^{op}),\mathcal{B}_C(R))$$ is a perfect coproduct-closed and product-closed duality pair
and $\mathcal{A}_C(R^{op})$ is covering and preenveloping in $\Mod R^{op}$.
\item[(2)] The pair
$$(\mathcal{B}_C(R),\mathcal{A}_C(R^{op}))$$ is a coproduct-closed and product-closed duality pair
and $\mathcal{B}_C(R)$ is covering and preenveloping in $\Mod R$.
\end{enumerate}
\end{theorem}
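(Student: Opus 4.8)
Our plan is to verify, for both pairs, the requirements of Definition~\ref{2.3} and then apply Lemma~\ref{2.4}. Condition~(1.1) of Definition~\ref{2.3} is exactly Proposition~\ref{3.2}: for $N\in\Mod R^{op}$ one has $N\in\mathcal{A}_C(R^{op})$ iff $N^{+}\in\mathcal{B}_C(R)$, and for $M\in\Mod R$ one has $M\in\mathcal{B}_C(R)$ iff $M^{+}\in\mathcal{A}_C(R^{op})$. So, apart from the two extra items needed for perfectness of the first pair, everything reduces to showing that $\mathcal{A}_C(R^{op})$ and $\mathcal{B}_C(R)$ are closed under direct summands, arbitrary coproducts and arbitrary products.

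I would first dispose of coproducts and direct summands. Because $\Tor$ and $\otimes$ commute with coproducts in each variable, and because ${}_{R}C$ and $C_{S}$ admit degreewise finite projective resolutions --- so that $\Hom_R(C,-)$, $\Ext^{\geq 1}_R(C,-)$, $\Hom_{S^{op}}(C,-)$ and $\Ext^{\geq 1}_{S^{op}}(C,-)$ all commute with coproducts --- each defining condition (a1)--(a3) of $\mathcal{A}_C(R^{op})$ and (b1)--(b3) of $\mathcal{B}_C(R)$ is inherited by coproducts; for (a3) and (b3) one uses additivity of the valuation maps $\mu_{(-)}$, $\theta_{(-)}$, so that the valuation map of a coproduct is, after the canonical identifications, the coproduct of the valuation maps. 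Since all the functors and natural maps in sight are additive, a direct summand of a member of either class is again a member, giving closure under direct summands and hence under finite direct sums (the remaining half of Definition~\ref{2.3}(1.2)). For products I would fix degreewise finite projective resolutions $P_{\bullet}\to{}_{R}C$ and $Q_{\bullet}\to C_{S}$: as each $P_{n}$, $Q_{n}$ is finitely generated projective, the functors $-\otimes_{R}P_{n}$ and $Q_{n}\otimes_{S}-$ commute with products, hence so do $\Tor^{R}_{\geq 1}(-,C)$ and $\Tor^{S}_{\geq 1}(C,-)$, while $-\otimes_{R}C$ and $C\otimes_{S}-$ commute with products because ${}_{R}C$, $C_{S}$ are finitely presented; since $\Hom$ and $\Ext$ always commute with products in the covariant argument, all of (a1)--(a3) and (b1)--(b3) are inherited by products, once more identifying the valuation map of a product with the product of the valuation maps. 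With Proposition~\ref{3.2} this shows that $(\mathcal{A}_C(R^{op}),\mathcal{B}_C(R))$ and $(\mathcal{B}_C(R),\mathcal{A}_C(R^{op}))$ are coproduct-closed and product-closed duality pairs.

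For perfectness of the first pair I would check that $R_{R}$ (the free right $R$-module of rank one) lies in $\mathcal{A}_C(R^{op})$ and that $\mathcal{A}_C(R^{op})$ is closed under extensions. For the first: $\Tor^{R}_{\geq 1}(R,C)=0$ is automatic, $R\otimes_{R}C\cong C_{S}$ lies in ${C_S}^{\bot}$ by condition~(c2) of Definition~\ref{2.5}, and $\mu_{R}\colon R\to\Hom_{S^{op}}(C,C)$ is the homothety map, an isomorphism by~(b1). For closure under extensions, given $0\to N'\to N\to N''\to 0$ with $N',N''\in\mathcal{A}_C(R^{op})$, the long exact $\Tor$-sequence yields $N\in{^{\top}{_RC}}$ together with exactness of $0\to N'\otimes_{R}C\to N\otimes_{R}C\to N''\otimes_{R}C\to 0$; since ${C_S}^{\bot}$ is closed under extensions, $N\otimes_{R}C\in{C_S}^{\bot}$, and applying $\Hom_{S^{op}}(C,-)$ to that sequence (the connecting map lands in $\Ext^{1}_{S^{op}}(C,N'\otimes_{R}C)=0$) gives a short exact sequence of the modules $(-\otimes_{R}C)_{*}$ sitting in a ladder with $\mu_{N'},\mu_{N},\mu_{N''}$; the five lemma then forces $\mu_{N}$ to be an isomorphism, so $N\in\mathcal{A}_C(R^{op})$, and the pair is perfect.

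Finally I would invoke Lemma~\ref{2.4}: coproduct-closedness gives that $\mathcal{A}_C(R^{op})$ is covering in $\Mod R^{op}$ and $\mathcal{B}_C(R)$ is covering in $\Mod R$, and product-closedness gives that both are preenveloping, which completes (1) and (2). The step I expect to be the main obstacle is closure under products: it is the only point where the degreewise finiteness of the projective resolutions of ${}_{R}C$ and $C_{S}$ is genuinely needed, and it is also where one must take care that, under the canonical isomorphisms $(\prod X_{i})\otimes_{R}C\cong\prod(X_{i}\otimes_{R}C)$ and their analogues for $\Hom$ and for $\otimes_{S}$, the valuation homomorphisms $\mu$ and $\theta$ of a product really do coincide with the products of the componentwise valuation homomorphisms.
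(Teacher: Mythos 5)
Your proof is correct and follows essentially the same route as the paper: Proposition \ref{3.2} supplies the duality-pair condition, closure of the two classes under summands, coproducts and products combined with Lemma \ref{2.4} gives the covering and preenveloping statements, and $R_R\in\mathcal{A}_C(R^{op})$ together with closure under extensions gives perfectness. The only difference is that the paper simply quotes Holm--White (\cite[Proposition 4.2(a) and Theorem 6.2]{HW07}) for these closure properties and for the projectively resolving property, whereas you verify them directly, and your verifications (degreewise finite projective resolutions of $_RC$ and $C_S$ to make the relevant functors commute with products, compatibility of $\mu$ and $\theta$ with (co)products, and a five-lemma argument for extensions) are sound.
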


\begin{proof}
It follows from \cite[Proposition 4.2(a)]{HW07} that both $\mathcal{A}_C(R^{op})$ and $\mathcal{B}_C(R)$ are closed under
direct summands, coproducts and products. So by Lemma \ref{2.4}(1)(2) and Proposition \ref{3.2}, we have that
both the pairs
$$(\mathcal{A}_C(R^{op}),\mathcal{B}_C(R))\ {\rm and}\ (\mathcal{B}_C(R),\mathcal{A}_C(R^{op}))$$
are coproduct-closed and product-closed duality pairs, $\mathcal{A}_C(R^{op})$ is covering and preenveloping in $\Mod R^{op}$
and $\mathcal{B}_C(R)$ is covering and preenveloping in $\Mod R$. Moreover, $\mathcal{A}_C(R^{op})$ is projectively resolving
by \cite[Theorem 6.2]{HW07}, so the duality pair $(\mathcal{A}_C(R^{op}),\mathcal{B}_C(R))$ is perfect.
\end{proof}

We write
$$\mathcal{A}_C(R^{op})^{\bot}:=\{Y\in\Mod R^{op}\mid \Ext_{R^{op}}^{\geq 1}(N,Y)=0\ \text{for any}\ N\in\mathcal{A}_C(R^{op})\}.$$
The following corollary was proved in \cite[Theorem 3.11]{EH} when $R$ is a commutative noetherian ring and $_RC_S={_RC_R}$.

\begin{corollary}\label{3.4}
The pair $$(\mathcal{A}_C(R^{op}),\mathcal{A}_C(R^{op})^{\bot})$$ is a hereditary perfect cotorsion pair
and $\mathcal{A}_C(R^{op})$ is covering and preenveloping in $\Mod R^{op}$.
\end{corollary}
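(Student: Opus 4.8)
The plan is to deduce Corollary \ref{3.4} directly from Theorem \ref{3.3}(1) together with Lemma \ref{2.4}(3). First I would observe that by Theorem \ref{3.3}(1) the pair $(\mathcal{A}_C(R^{op}),\mathcal{B}_C(R))$ is a perfect duality pair in the sense of Definition \ref{2.3}(3); that is, it is coproduct-closed, $_{R^{op}}R\in\mathcal{A}_C(R^{op})$, and $\mathcal{A}_C(R^{op})$ is closed under extensions. (The membership $R\in\mathcal{A}_C(R^{op})$ and closure under extensions are exactly the ingredients already invoked in the proof of Theorem \ref{3.3}, coming from \cite[Theorem 6.2]{HW07}.) Then Lemma \ref{2.4}(3) applies verbatim and yields that $(\mathcal{A}_C(R^{op}),\mathcal{A}_C(R^{op})^{\bot})$ is a perfect cotorsion pair in $\Mod R^{op}$; in particular $\mathcal{A}_C(R^{op})$ is covering and $\mathcal{A}_C(R^{op})^{\bot}$ is enveloping. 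The statement that $\mathcal{A}_C(R^{op})$ is preenveloping is already part of Theorem \ref{3.3}(2), so that clause needs no new argument.

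The remaining point is hereditariness. Here I would verify condition (3.2) of Definition \ref{2.2}: that $\mathcal{A}_C(R^{op})$ is projectively resolving, meaning it contains all projective $R^{op}$-modules, is closed under extensions, and is closed under kernels of epimorphisms. All three are recorded in the cited structural results on Auslander classes: \cite[Proposition 4.2]{HW07} gives that $_{R^{op}}R$, hence every projective, lies in $\mathcal{A}_C(R^{op})$ and that the class is closed under direct summands and (co)products, while \cite[Theorem 6.2]{HW07} gives that $\mathcal{A}_C(R^{op})$ is projectively resolving — in particular closed under extensions and kernels of epimorphisms. Since a cotorsion pair whose left class is projectively resolving is hereditary by Definition \ref{2.2}(3), we conclude the cotorsion pair is hereditary.

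I do not anticipate a genuine obstacle here; the corollary is a formal consequence of Theorem \ref{3.3} and Lemma \ref{2.4}, and the only thing to be careful about is to cite the correct facts from \cite{HW07} for the two properties of $\mathcal{A}_C(R^{op})$ (membership of projectives and being projectively resolving) that feed into the definitions of "perfect duality pair" and "hereditary cotorsion pair". Concretely, the write-up would be: By Theorem \ref{3.3}(1), $(\mathcal{A}_C(R^{op}),\mathcal{B}_C(R))$ is a perfect duality pair, so by Lemma \ref{2.4}(3) the pair $(\mathcal{A}_C(R^{op}),\mathcal{A}_C(R^{op})^{\bot})$ is a perfect cotorsion pair; it is hereditary because $\mathcal{A}_C(R^{op})$ is projectively resolving by \cite[Proposition 4.2(a) and Theorem 6.2]{HW07}, so condition (3.2) of Definition \ref{2.2} holds; and $\mathcal{A}_C(R^{op})$ is covering and preenveloping in $\Mod R^{op}$ by Theorem \ref{3.3}. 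This completes the proof.
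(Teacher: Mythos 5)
Your proposal is correct and follows the same route as the paper, which proves the corollary simply by citing Theorem \ref{3.3}(1) together with Lemma \ref{2.4}(3). The only difference is that you spell out the hereditariness via condition (3.2) of Definition \ref{2.2} and the projectively resolving property of $\mathcal{A}_C(R^{op})$ from \cite[Theorem 6.2]{HW07}, a step the paper leaves implicit (it is already invoked in the proof of Theorem \ref{3.3}), so your write-up is a faithful, slightly more explicit version of the paper's argument.
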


\begin{proof}
It follows from Theorem \ref{3.3}(1) and Lemma \ref{2.4}(3).
\end{proof}

The following two results are the symmetric versions of Theorem \ref{3.3} and Corollary \ref{3.4} respectively.

\begin{theorem}\label{3.5}
\begin{enumerate}
\item[]
\item[(1)] The pair $$(\mathcal{A}_C(S),\mathcal{B}_C(S^{op}))$$ is a perfect coproduct-closed and product-closed duality pair
and $\mathcal{A}_C(S)$ is covering and preenveloping in $\Mod S$.
\item[(2)] The pair $$(\mathcal{B}_C(S^{op}),\mathcal{A}_C(S))$$ is a coproduct-closed and product-closed duality pair
and $\mathcal{B}_C(S^{op})$ is covering and preenveloping in $\Mod S^{op}$.
\end{enumerate}
\end{theorem}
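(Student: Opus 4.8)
The plan is to deduce Theorem~\ref{3.5} from Theorem~\ref{3.3} by a change of rings, since the statement is genuinely symmetric in $R$ and $S$. First I would verify that the bimodule ${}_RC_S$, regarded as an $(S^{op},R^{op})$-bimodule ${}_{S^{op}}C_{R^{op}}$, is again semidualizing: in Definition~\ref{2.5} the axioms (a1), (b1), (c1) for ${}_{S^{op}}C_{R^{op}}$ are exactly the axioms (a2), (b2), (c2) for ${}_RC_S$, and (a2), (b2), (c2) for ${}_{S^{op}}C_{R^{op}}$ are (a1), (b1), (c1) for ${}_RC_S$ (using $(R^{op})^{op}=R$ and $(S^{op})^{op}=S$). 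Next I would unwind Definition~\ref{3.1}(3) to see that the Auslander class $\mathcal{A}_C(S)$ in $\Mod S$ is precisely the Auslander class of the semidualizing bimodule ${}_{S^{op}}C_{R^{op}}$ ``in $\Mod (S^{op})^{op}$'', and that $\mathcal{B}_C(S^{op})$ is its Bass class in $\Mod S^{op}$.

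Granting this, Theorem~\ref{3.3} applied to ${}_{S^{op}}C_{R^{op}}$ in place of ${}_RC_S$ gives both halves of Theorem~\ref{3.5} at once: part~(1) yields that $(\mathcal{A}_C(S),\mathcal{B}_C(S^{op}))$ is a perfect coproduct-closed and product-closed duality pair with $\mathcal{A}_C(S)$ covering and preenveloping in $\Mod S$, and part~(2) yields the analogous statement for $(\mathcal{B}_C(S^{op}),\mathcal{A}_C(S))$.

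Alternatively --- and this is the version I would write out to keep the argument self-contained --- one mirrors the proof of Theorem~\ref{3.3} directly. One first records the symmetric form of Proposition~\ref{3.2}: for $N\in\Mod S$ one has $N\in\mathcal{A}_C(S)$ if and only if $N^+\in\mathcal{B}_C(S^{op})$, and for $M\in\Mod S^{op}$ one has $M\in\mathcal{B}_C(S^{op})$ if and only if $M^+\in\mathcal{A}_C(S)$; the proof is that of Proposition~\ref{3.2} with the roles of $R$ and $S$ interchanged, since the defining conditions of the two classes are dual item by item. Then one invokes \cite[Proposition~4.2(a)]{HW07} to see that $\mathcal{A}_C(S)$ and $\mathcal{B}_C(S^{op})$ are closed under direct summands, coproducts and products, and \cite[Theorem~6.2]{HW07} to see that $\mathcal{A}_C(S)$ is projectively resolving; plugging this into Lemma~\ref{2.4}(1)(2) and~(3) produces both duality pairs, their (co)product-closedness, the covering and preenveloping conclusions, and the perfectness of $(\mathcal{A}_C(S),\mathcal{B}_C(S^{op}))$.

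I do not expect a genuine mathematical obstacle here; the only delicate point is the bookkeeping with opposite rings --- one must check carefully that ``interchanging $R$ and $S$'' sends $\mathcal{A}_C(R^{op})\rightsquigarrow\mathcal{A}_C(S)$ and $\mathcal{B}_C(R)\rightsquigarrow\mathcal{B}_C(S^{op})$, and that the semidualizing axioms of Definition~\ref{2.5} are stable under this interchange. Once that is in place, nothing further is needed.
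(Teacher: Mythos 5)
Your proposal is correct and is exactly what the paper intends: Theorem \ref{3.5} is stated there without proof as ``the symmetric version'' of Theorem \ref{3.3}, i.e.\ one reruns Proposition \ref{3.2}, \cite[Proposition 4.2(a), Theorem 6.2]{HW07} and Lemma \ref{2.4} with the roles of $R$ and $S$ (equivalently, with the semidualizing bimodule viewed as $_{S^{op}}C_{R^{op}}$) interchanged, which is precisely your argument. Your bookkeeping with opposite rings ($\mathcal{A}_C(R^{op})\rightsquigarrow\mathcal{A}_C(S)$, $\mathcal{B}_C(R)\rightsquigarrow\mathcal{B}_C(S^{op})$, and the swap of axioms (a1)--(c1) with (a2)--(c2)) is accurate, so nothing is missing.
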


We write
$$\mathcal{A}_C(S)^{\bot}:=\{X\in\Mod S\mid \Ext_{S}^{\geq 1}(N^{'},X)=0\ \text{for any}\ N^{'}\in\mathcal{A}_C(S)\}.$$

\begin{corollary}\label{3.6}
The pair $$(\mathcal{A}_C(S),\mathcal{A}_C(S)^{\bot})$$ is a hereditary perfect cotorsion pair
and $\mathcal{A}_C(S)$ is covering and preenveloping in $\Mod S$.
\end{corollary}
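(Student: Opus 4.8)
The plan is to simply transport the proof of Corollary \ref{3.4} through the left--right symmetry built into the whole section. Recall that Theorem \ref{3.5} is asserted to be the symmetric version of Theorem \ref{3.3}, obtained by swapping the roles of $R$ and $S$ (equivalently, applying Theorem \ref{3.3} to the semidualizing bimodule $_SC_R$ over the opposite rings). In particular, Theorem \ref{3.5}(1) states that $(\mathcal{A}_C(S),\mathcal{B}_C(S^{op}))$ is a perfect coproduct-closed and product-closed duality pair. So the only thing to do is feed this into Lemma \ref{2.4}(3).

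First I would invoke Theorem \ref{3.5}(1) to get that $(\mathcal{A}_C(S),\mathcal{B}_C(S^{op}))$ is a perfect duality pair in the sense of Definition \ref{2.3}(3); that is, it is coproduct-closed, $_SS\in\mathcal{A}_C(S)$, and $\mathcal{A}_C(S)$ is closed under extensions. Then Lemma \ref{2.4}(3) applies verbatim and yields that $(\mathcal{A}_C(S),\mathcal{A}_C(S)^{\bot})$ is a perfect cotorsion pair, where $\mathcal{A}_C(S)^{\bot}$ is exactly the right $\Ext$-orthogonal class defined just before the corollary. Next, to see that this cotorsion pair is hereditary, I would check condition (3.2) of Definition \ref{2.2}: $\mathcal{A}_C(S)$ contains all projective left $S$-modules (which holds since projectives lie in every Auslander class), it is closed under extensions (already noted), and it is closed under kernels of epimorphisms --- this last point is precisely the statement that $\mathcal{A}_C(S)$ is projectively resolving, which is the symmetric form of \cite[Theorem 6.2]{HW07} used in the proof of Theorem \ref{3.3}. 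Hence the cotorsion pair is hereditary.

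Finally, the covering and preenveloping assertion for $\mathcal{A}_C(S)$ in $\Mod S$ is already contained in Theorem \ref{3.5}(1), so I would just cite it (or re-derive it from Lemma \ref{2.4}(1)(2) together with the coproduct- and product-closedness of the duality pair). Altogether the proof is a one-line deduction: it follows from Theorem \ref{3.5}(1) and Lemma \ref{2.4}(3), together with the symmetric version of \cite[Theorem 6.2]{HW07} for the heredity. There is no real obstacle here; the only mild care needed is to make sure the ``symmetric version'' of \cite[Theorem 6.2]{HW07} (that $\mathcal{A}_C(S)$ is projectively resolving) is legitimately available, but this is exactly parallel to its use in the proof of Theorem \ref{3.3}(1) and is subsumed in the statement of Theorem \ref{3.5}. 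So the expected write-up is essentially identical to the proof of Corollary \ref{3.4}, with all instances of $R^{op}$ replaced by $S$.
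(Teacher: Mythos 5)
Your proposal is correct and follows essentially the same route as the paper, which derives Corollary \ref{3.6} as the symmetric version of Corollary \ref{3.4}, i.e.\ from Theorem \ref{3.5}(1) together with Lemma \ref{2.4}(3). Your explicit verification of heredity via the fact that $\mathcal{A}_C(S)$ is projectively resolving (the symmetric form of \cite[Theorem 6.2]{HW07}, condition (3.2) of Definition \ref{2.2}) is exactly the ingredient the paper leaves implicit, so there is nothing to add.
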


Holm and White proved in \cite[Proposition 4.1]{HW07} that there exist the following (Foxby) equivalences of categories
$$\xymatrix@C=16ex{{\mathcal{A}_C(S)}\ar@<0.8ex>[r]_-{\sim}^-{C\otimes_{S}-}&
{\mathcal{B}_{C}(R)}\ar@<0.8ex>[l]^-{\Hom_{R}(C,-)},&}$$
$$\xymatrix@C=16ex{{\mathcal{A}_C(R^{op})}\ar@<0.8ex>[r]_-{\sim}^-{-\otimes_{R}C}&
{\mathcal{B}_{C}(S^{op})}\ar@<0.8ex>[l]^-{\Hom_{S^{op}}(C,-)}.&}$$
Compare this result with Theorems \ref{3.3} and \ref{3.5}.

By Theorems \ref{3.3}(2) and \ref{3.5}(1), $\mathcal{B}_C(R)$ is preenveloping in $\Mod R$
and $\mathcal{A}_C(S)$ is preenveloping in $\Mod S$. In the following result, we construct
an $\mathcal{A}_C(S)$-preenvelope of a given module in $\Mod S$ from a $\mathcal{B}_C(R)$-preenvelope
of some module in $\Mod R$.

\begin{theorem}\label{3.7}
\begin{enumerate}
\item[]
\item[(1)] Let $N\in\Mod S$ and
$$f:C\otimes_SN\to B$$ be a $\mathcal{B}_C(R)$-preenvelope of $C\otimes_SN$ in $\Mod R$.
Then we have
\begin{enumerate}
\item[(1.1)] $$f_*\mu_N:N\to B_*$$ is an $\mathcal{A}_C(S)$-preenvelope of $N$ in $\Mod S$.
\item[(1.2)] If $f$ is a $\mathcal{B}_C(R)$-envelope of $C\otimes_SN$, then $f_*\mu_N$ is an $\mathcal{A}_C(S)$-envelope of $N$.
\end{enumerate}
\item[(2)] If $\mathcal{B}_C(R)$ is enveloping in $\Mod R$, then $\mathcal{A}_C(S)$ is enveloping in $\Mod S$.
\end{enumerate}
\end{theorem}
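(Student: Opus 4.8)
The plan is to prove the two parts of Theorem~\ref{3.7} together, with part~(2) being a direct consequence of part~(1). The starting point is the Foxby equivalence
$$\xymatrix@C=14ex{{\mathcal{A}_C(S)}\ar@<0.8ex>[r]^-{C\otimes_{S}-}&{\mathcal{B}_{C}(R)}\ar@<0.8ex>[l]^-{(-)_*},}$$
recalled from \cite[Proposition 4.1]{HW07}, together with the units and counits of the corresponding adjunction, namely $\mu_N:N\to(C\otimes_SN)_*$ for $N\in\Mod S$ (the $S$-module analogue of (a3)) and $\theta_M:C\otimes_SM_*\to M$ for $M\in\Mod R$ (condition (b3)). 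The key fact to keep in mind is that $\mu_N$ is an isomorphism as soon as $N\in\mathcal{A}_C(S)$, and $\theta_M$ is an isomorphism as soon as $M\in\mathcal{B}_C(R)$; moreover, for \emph{any} $N\in\Mod S$ one has $C\otimes_SN$ need not lie in $\mathcal{B}_C(R)$, which is exactly why we must pass through a $\mathcal{B}_C(R)$-preenvelope.

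For part~(1.1), let $f:C\otimes_SN\to B$ be a $\mathcal{B}_C(R)$-preenvelope. Applying $(-)_*=\Hom_R(C,-)$ gives $f_*:(C\otimes_SN)_*\to B_*$, and precomposing with the unit $\mu_N$ yields a morphism $f_*\mu_N:N\to B_*$ in $\Mod S$. First I would check that $B_*\in\mathcal{A}_C(S)$: this is immediate from the Foxby equivalence, since $B\in\mathcal{B}_C(R)$ forces $B_*\in\mathcal{A}_C(S)$. The heart of the argument is the universal property. Let $N'\in\mathcal{A}_C(S)$ and let $g:N\to N'$ be any $S$-homomorphism; I must lift $g$ through $f_*\mu_N$. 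The natural move is to apply $C\otimes_S-$ to $g$, obtaining $C\otimes_Sg:C\otimes_SN\to C\otimes_SN'$, and to note that $C\otimes_SN'\in\mathcal{B}_C(R)$ by the Foxby equivalence; hence the preenvelope property of $f$ produces $h:B\to C\otimes_SN'$ with $hf=C\otimes_Sg$. Now apply $(-)_*$ and use naturality of $\mu$: the square relating $\mu_N,\mu_{N'}$ to $C\otimes_Sg$ and $(C\otimes_Sg)_*$ commutes, and since $N'\in\mathcal{A}_C(S)$ the map $\mu_{N'}$ is an isomorphism. Composing $h_*$ with $\mu_{N'}^{-1}$ gives a map $N'\to$ wait — one gets $\mu_{N'}^{-1}h_*:B_*\to N'$, and a diagram chase using $h_*f_*=(hf)_*=(C\otimes_Sg)_*$ together with the naturality square $\mu_{N'}g=(C\otimes_Sg)_*\mu_N$ shows $(\mu_{N'}^{-1}h_*)(f_*\mu_N)=g$. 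This is the step I expect to be the main obstacle: getting the adjunction bookkeeping exactly right so that the lifted map is genuinely a factorization of $g$, not merely of $C\otimes_Sg$. The cleanest way to organize it is to observe that for $N'\in\mathcal{A}_C(S)$ the assignment $g\mapsto C\otimes_Sg$ is a bijection $\Hom_S(N,N')\cong\Hom_R(C\otimes_SN,C\otimes_SN')$ with inverse $(-)_*$ followed by $\mu_{N'}^{-1}(-)\mu_N$, and then a morphism $N\to B_*$ into something of the form $B_*$ with $B\in\mathcal{B}_C(R)$ corresponds under the same adjunction to a morphism $C\otimes_SN\to B$; under this correspondence $f_*\mu_N$ corresponds precisely to $f$, so the preenvelope property of $f$ transports verbatim to the preenvelope property of $f_*\mu_N$.

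For part~(1.2), suppose $f$ is a $\mathcal{B}_C(R)$-envelope, i.e.\ additionally left minimal. I must show $f_*\mu_N$ is left minimal. Suppose $h:B_*\to B_*$ satisfies $h(f_*\mu_N)=f_*\mu_N$. Applying $C\otimes_S-$ and using $\theta_B:C\otimes_SB_*\xrightarrow{\sim}B$ (an isomorphism since $B\in\mathcal{B}_C(R)$), one obtains an endomorphism $\theta_B(C\otimes_Sh)\theta_B^{-1}$ of $B$; the relation $h(f_*\mu_N)=f_*\mu_N$ translates, via the counit--unit identities of the Foxby adjunction ($\theta_{C\otimes_SN}(C\otimes_S\mu_N)=\id$), into the statement that this endomorphism of $B$ fixes $f$. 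By left minimality of $f$ it is an automorphism of $B$, hence $C\otimes_Sh$ is an automorphism, hence $h$ is an automorphism because $C\otimes_S-$ reflects isomorphisms on $\mathcal{B}_C(R)$ (its quasi-inverse being $(-)_*$). Finally, part~(2) is immediate: given any $N\in\Mod S$, form $C\otimes_SN\in\Mod R$, take its $\mathcal{B}_C(R)$-envelope $f$ (which exists by hypothesis), and apply (1.2) to produce an $\mathcal{A}_C(S)$-envelope $f_*\mu_N$ of $N$; since $N$ was arbitrary, $\mathcal{A}_C(S)$ is enveloping in $\Mod S$.
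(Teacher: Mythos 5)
Your proposal is correct and follows essentially the same route as the paper: for (1.1) you use the Foxby equivalence to see $B_*\in\mathcal{A}_C(S)$ and $C\otimes_SA\in\mathcal{B}_C(R)$, factor $1_C\otimes g$ through $f$, and recover a factorization of $g$ via naturality of $\mu$ and invertibility of $\mu_A$; for (1.2) you translate $h(f_*\mu_N)=f_*\mu_N$ through $C\otimes_S-$ using $\theta_B$, the identity $\theta_{C\otimes_SN}(1_C\otimes\mu_N)=\operatorname{id}$ and left minimality of $f$, then return via $\mu_{B_*}$ (equivalently, the equivalence on $\mathcal{A}_C(S)$ reflecting isomorphisms) to conclude $h$ is an automorphism, with (2) immediate from (1.2). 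This matches the paper's proof in all essential steps.
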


\begin{proof}
(1.1) Let $N\in\Mod S$ and
$$f:C\otimes_SN\to B$$ be a $\mathcal{B}_C(R)$-preenvelope in $\Mod R$.
By \cite[Proposition 4.1]{HW07}, we have $B_*\in\mathcal{A}_C(S)$.
Let $g\in\Hom_S(N,A)$ with $A\in\mathcal{A}_C(S)$. By \cite[Proposition 4.1]{HW07}
again, we have $C\otimes_SA\in\mathcal{B}_C(R)$. So there exists $h\in\Hom_R(B,C\otimes_SA)$
such that $1_C\otimes g=hf$, that is, the following diagram
$$\xymatrix@R=15pt@C=15pt{
&&C\otimes_SN\ar[d]_{1_C\otimes g}\ar[r]^{\ \ \ \ f}&B\ar@{-->}[ld]^{h}&\\
&&C\otimes_SA&&}$$
commutes. From the following commutative diagram
$$\xymatrix@R=20pt@C=20pt{
N\ar[r]^{g}\ar[d]_{\mu_N} &A\ar[d]^{\mu_A}\\
(C\otimes_SN)_*\ar[r]^{(1_C\otimes g)_*}&(C\otimes_SA)_*,}$$
we get $\mu_Ag=(1_C\otimes g)_*\mu_N$. Because $\mu_A$ is an isomorphism, we have
$$g={\mu_A}^{-1}(1_C\otimes g)_*\mu_N=({\mu_A}^{-1}h_*)(f_*\mu_N),$$
that is, the following diagram
$$\xymatrix@R=15pt@C=15pt{
&&N\ar[d]_{g}\ar[r]^{f_*\mu_N}&B\ar@{-->}[ld]^{{\mu_A}^{-1}h_*}&\\
&&A&&}$$
commutes. Thus $f_*\mu_N:N\to B_*$ is an $\mathcal{A}_C(S)$-preenvelope of $N$.

(1.2) By (1.1), it suffices to prove that if $f$ is left minimal, then so is $f_*\mu_N$.

Let $f$ be left minimal and $h\in\Hom_S(B_*,B_*)$ such that $f_*\mu_N=h(f_*\mu_N)$. Then we have
$$(1_C\otimes f_*)(1_C\otimes \mu_N)=1_C\otimes (f_*\mu_N)=1_C\otimes (h(f_*\mu_N))
=(1_C\otimes h)(1_C\otimes f_*)(1_C\otimes \mu_N).\eqno{(3.1)}$$
From the following commutative diagram
$$\xymatrix@R=20pt@C=20pt{
C\otimes_S(C\otimes_SN)_*\ar[r]^{1_C\otimes f_*}\ar[d]_{\theta_{C\otimes_SN}} &C\otimes_SB_*\ar[d]^{\theta_B}\\
C\otimes_SN\ar[r]^{f}&B,}$$
we get
$$f\theta_{C\otimes_SN}=\theta_B(1_C\otimes f_*).\eqno{(3.2)}$$
So we have
\begin{align*}
&\ \ \ \ \ \ \ \ \ \ f=f1_{C\otimes_SN}\\
&\ \ \ \ \ \ \ \ \ \ \ \ =f(\theta_{C\otimes_SN}(1_C\otimes \mu_N))\ \text{(by \cite[Proposition 2.2(1)]{Wi}})\\
&\ \ \ \ \ \ \ \ \ \ \ \ =\theta_B(1_C\otimes f_*)(1_C\otimes \mu_N)\ \text{(by (3.2))}\\
&\ \ \ \ \ \ \ \ \ \ \ \ =\theta_B(1_C\otimes h)(1_C\otimes f_*)(1_C\otimes \mu_N)\ \text{(by (3.1))}\\
&\ \ \ \ \ \ \ \ \ \ \ \ =\theta_B(1_C\otimes h)({\theta_B}^{-1}\theta_B)(1_C\otimes f_*)(1_C\otimes \mu_N)\ \text{(because $\theta_B$ is an isomorphism)}\\
&\ \ \ \ \ \ \ \ \ \ \ \ =\theta_B(1_C\otimes h){\theta_B}^{-1}f\theta_{C\otimes_SN}(1_C\otimes \mu_N)\ \text{(by (3.2))}\\
&\ \ \ \ \ \ \ \ \ \ \ \ =\theta_B(1_C\otimes h){\theta_B}^{-1}f1_{C\otimes_SN}\ \text{(by \cite[Proposition 2.2(1)]{Wi}})\\
&\ \ \ \ \ \ \ \ \ \ \ \ =\theta_B(1_C\otimes h){\theta_B}^{-1}f.
\end{align*}
Because $f$ is left minimal, $\theta_B(1_C\otimes h){\theta_B}^{-1}$ is an isomorphism, which implies that
$1_C\otimes h$ and $(1_C\otimes h)_*$ are also isomorphisms. From the following commutative diagram
$$\xymatrix@R=20pt@C=20pt{
B_*\ar[r]^{h}\ar[d]_{\mu_{B_*}} &B_*\ar[d]^{\mu_{B_*}}\\
(C\otimes_SB_*)_*\ar[r]^{(1_C\otimes h)_*}&(C\otimes_SB_*)_*,}$$
we get
$$(1_C\otimes h)_*\mu_{B_*}=\mu_{B_*}h.$$
Because $B_*\in\mathcal{A}_C(S)$ by \cite[Proposition 4.1]{HW07}, $\mu_{B_*}$ is an isomorphism.
It follows that $h$ is also an isomorphism and $f_*\mu_N$ is left minimal.

(2) It follows from the assertion (1.2) immediately.
\end{proof}

We do not know whether a $\mathcal{B}_C(R)$-preenvelope of given module in $\Mod R$
can be constructed from an $\mathcal{A}_C(S)$-preenvelope of some module in $\Mod S$,
and do not know whether the converse of Theorem \ref{3.7}(2) holds true.

By Theorems \ref{3.3}(2) and \ref{3.5}(1), $\mathcal{B}_C(R)$ is covering in $\Mod R$
and $\mathcal{A}_C(S)$ is covering in $\Mod S$. In the following result, we construct
a $\mathcal{B}_C(R)$-cover of a given module in $\Mod R$ from an $\mathcal{A}_C(S)$-cover
of some module in $\Mod S$.

\begin{proposition}\label{3.8}
Let $M\in\Mod R$ and $$g:A\to M_*$$ be an $\mathcal{A}_C(S)$-cover of $M_*$ in $\Mod S$.
Then
$$\theta_M(1_C\otimes g):C\otimes_SA\to M$$ is a $\mathcal{B}_C(R)$-cover of $M$ in $\Mod R$.
\end{proposition}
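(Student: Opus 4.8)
The plan is to mirror the structure of the proof of Theorem~\ref{3.7}(1), exploiting the Foxby equivalence between $\mathcal{A}_C(S)$ and $\mathcal{B}_C(R)$ given by $C\otimes_S-$ and $\Hom_R(C,-)=(-)_*$. First I would note that since $A\in\mathcal{A}_C(S)$, \cite[Proposition~4.1]{HW07} gives $C\otimes_SA\in\mathcal{B}_C(R)$, so $\theta_M(1_C\otimes g)$ is indeed a morphism from an object of $\mathcal{B}_C(R)$ to $M$; the target $M$ itself need not lie in $\mathcal{B}_C(R)$, which is fine for the notion of a $\mathcal{B}_C(R)$-(pre)cover.

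Next I would check the precover property. Take any $h'\colon B'\to M$ with $B'\in\mathcal{B}_C(R)$. Applying $(-)_*=\Hom_R(C,-)$ yields $h'_*\colon B'_*\to M_*$ with $B'_*\in\mathcal{A}_C(S)$ by \cite[Proposition~4.1]{HW07}, so since $g$ is an $\mathcal{A}_C(S)$-precover there is $k\colon B'_*\to A$ with $gk=h'_*$. Then $1_C\otimes k\colon C\otimes_SB'_*\to C\otimes_SA$ fits into a naturality square for $\theta$, namely $\theta_{M}(1_C\otimes g)(1_C\otimes k)=\theta_M(1_C\otimes gk)=\theta_M(1_C\otimes h'_*)$, and the naturality of $\theta$ gives $\theta_M(1_C\otimes h'_*)=h'\theta_{B'}$. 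Since $B'\in\mathcal{B}_C(R)$, the map $\theta_{B'}\colon C\otimes_SB'_*\to B'$ is an isomorphism, so setting $k':=(1_C\otimes k)\theta_{B'}^{-1}\colon B'\to C\otimes_SA$ gives $\theta_M(1_C\otimes g)\,k'=h'$. Hence $\theta_M(1_C\otimes g)$ is a $\mathcal{B}_C(R)$-precover.

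Finally, right minimality: suppose $\varphi\colon C\otimes_SA\to C\otimes_SA$ satisfies $\theta_M(1_C\otimes g)\varphi=\theta_M(1_C\otimes g)$. Apply $(-)_*$ and use the natural isomorphism $\mu_A\colon A\xrightarrow{\sim}(C\otimes_SA)_*$ (valid since $A\in\mathcal{A}_C(S)$) to transport $\varphi$ to an endomorphism $\psi:=\mu_A^{-1}\varphi_*\mu_A\colon A\to A$. Using the compatibility $\theta_M\circ(1_C\otimes(-)_*)$ versus evaluation (the identities $f\theta_{C\otimes_SN}=\theta_B(1_C\otimes f_*)$ and $\theta_{C\otimes_SA}(1_C\otimes\mu_A)=1_{C\otimes_SA}$ used in the proof of Theorem~\ref{3.7}(1.2), from \cite[Proposition~2.2(1)]{Wi}), one shows $g\psi=g$; right minimality of $g$ then forces $\psi$, hence $\varphi_*$, hence $\varphi$ (again via $\mu_A$ and the isomorphism $C\otimes_S-$ restricted to $\mathcal{A}_C(S)$) to be an automorphism. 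I expect the main obstacle to be the minimality bookkeeping: carefully chasing $\varphi$ back and forth through $1_C\otimes-$, $(-)_*$, $\mu_A$ and $\theta$ so that the equation $\theta_M(1_C\otimes g)\varphi=\theta_M(1_C\otimes g)$ really does collapse to $g\psi=g$. This is precisely the dual of the argument already carried out in Theorem~\ref{3.7}(1.2), so the same chain of naturality squares — now for $\theta$ and $\mu$ in the opposite direction — should close the proof.
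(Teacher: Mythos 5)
Your argument is correct and follows essentially the same route as the paper: apply $(-)_*$, use the $\mathcal{A}_C(S)$-precover property of $g$ together with the naturality of $\theta$ and the isomorphism $\theta_{B'}$ for the precover part, then transport an endomorphism $\varphi$ of $C\otimes_SA$ to $\psi=\mu_A^{-1}\varphi_*\mu_A$ and use the dual triangle identity $(\theta_M)_*\mu_{M_*}=1_{M_*}$ plus naturality of $\mu$ to get $g\psi=g$, concluding via right minimality of $g$ and the fact that $\theta_{C\otimes_SA}$ is an isomorphism. This matches the paper's proof step for step, with the only loose end being exactly the bookkeeping you flag, which closes just as you predict.
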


\begin{proof}
Let $M\in\Mod R$ and $$g:A\to M_*$$ be an $\mathcal{A}_C(S)$-cover of $M_*$ in $\Mod S$.
By \cite[Proposition 4.1]{HW07}, we have $C\otimes_SA\in\mathcal{B}_C(R)$.
Let $f\in\Hom_R(B,M)$ with $B\in\mathcal{B}_C(R)$. By \cite[Proposition 4.1]{HW07}
again, we have $B_*\in\mathcal{A}_C(S)$. So there exists $h\in\Hom_S(B_*,A)$
such that ${f}_*=gh$, that is, the following diagram
$$\xymatrix{ & B_* \ar[d]^{{f}_*} \ar@{-->}[ld]_{h}\\
A \ar[r]^{g} & M_*}$$
commutes. From the following commutative diagram
$$\xymatrix@R=20pt@C=20pt{
C\otimes_SB_*\ar[r]^{1_C\otimes {f}_*}\ar[d]_{\theta_B} &C\otimes_SM_*\ar[d]^{\theta_M}\\
B\ar[r]^{f}&M,}$$
we get $f\theta_B=\theta_M(1_C\otimes {f}_*)$. Because $\theta_B$ is an isomorphism, we have
$$f=\theta_M(1_C\otimes {f}_*){\theta_B}^{-1}=\theta_M(1_C\otimes (gh)){\theta_B}^{-1}
=(\theta_M(1_C\otimes g))((1_C\otimes h)){\theta_B}^{-1}),$$
that is, the following diagram
$$\xymatrix{ & B \ar[d]^{f} \ar@{-->}[ld]_{(1_C\otimes h)){\theta_B}^{-1}}\\
C\otimes_SA \ar[r]_{\theta_M(1_C\otimes g)} & M}$$
commutes. Thus $\theta_M(1_C\otimes g):C\otimes_SA\to M$ is a $\mathcal{B}_C(R)$-precover of $M$.

In the following, it suffices to prove that $\theta_M(1_C\otimes g)$ is right minimal.

Let $h\in\Hom_R(C\otimes_SA,C\otimes_SA)$ such that $\theta_M(1_C\otimes g)=(\theta_M(1_C\otimes g))h$. Then we have
$$(\theta_M)_*(1_C\otimes g)_*=(\theta_M(1_C\otimes g))_*=((\theta_M(1_C\otimes g))h)_*
=(\theta_M)_*(1_C\otimes g)_*h_*.\eqno{(3.3)}$$
From the following commutative diagram
$$\xymatrix@R=20pt@C=20pt{
A\ar[r]^{g}\ar[d]_{\mu_A} &M_*\ar[d]^{\mu_{M_*}}\\
(C\otimes_SA)_*\ar[r]^{(1_C\otimes g)_*}&(C\otimes_SM_*)_*,}$$
we get
$$\mu_{M_*}g=(1_C\otimes g)_*\mu_A.\eqno{(3.4)}$$
So we have
\begin{align*}
&\ \ \ \ \ \ \ \ \ \ g=1_{M_*}g\\
&\ \ \ \ \ \ \ \ \ \ \ \ =(\theta_M)_*\mu_{M_*}g\ \text{(by \cite[Proposition 2.2(1)]{Wi}})\\
&\ \ \ \ \ \ \ \ \ \ \ \ =(\theta_M)_*(1_C\otimes g)_*\mu_A\ \text{(by (3.4))}\\
&\ \ \ \ \ \ \ \ \ \ \ \ =(\theta_M)_*(1_C\otimes g)_*h_*\mu_A\ \text{(by (3.3))}\\
&\ \ \ \ \ \ \ \ \ \ \ \ =(\theta_M)_*(1_C\otimes g)_*\mu_A{\mu_A}^{-1}h_*\mu_A\ \text{(because $\mu_A$ is an isomorphism)}\\
&\ \ \ \ \ \ \ \ \ \ \ \ =(\theta_M)_*\mu_{M_*}g{\mu_A}^{-1}h_*\mu_A\ \text{(by (3.4))}\\
&\ \ \ \ \ \ \ \ \ \ \ \ =1_{M_*}g{\mu_A}^{-1}h_*\mu_A\ \text{(by \cite[Proposition 2.2(1)]{Wi}})\\
&\ \ \ \ \ \ \ \ \ \ \ \ =g{\mu_A}^{-1}h_*\mu_A.
\end{align*}
Because $g$ is right minimal, ${\mu_A}^{-1}h_*\mu_A$ is an isomorphism, which implies that
$h_*$ and $1_C\otimes h_*$ are also isomorphisms. From the following commutative diagram
$$\xymatrix@R=20pt@C=20pt{
C\otimes_S(C\otimes_SA)_*\ar[r]^{1_C\otimes h_*}\ar[d]_{\theta_{C\otimes_SA}} &C\otimes_S(C\otimes_SA)_*\ar[d]^{\theta_{C\otimes_SA}}\\
C\otimes_SA\ar[r]^{h}&C\otimes_SA,}$$
we get
$$h\theta_{C\otimes_SA}=\theta_{C\otimes_SA}(1_C\otimes h_*).$$
Because $C\otimes_SA\in\mathcal{B}_C(R)$ by \cite[Proposition 4.1]{HW07}, $\theta_{C\otimes_SA}$ is an isomorphism.
It follows that $h$ is also an isomorphism and $\theta_M(1_C\otimes g)$ is right minimal.
\end{proof}

We do not know whether an $\mathcal{A}_C(S)$-cover of a given module in $\Mod S$
can be constructed from a $\mathcal{B}_C(R)$-cover of some module in $\Mod R$.

\section{The Auslander projective dimension of modules}

For a subcategory $\mathscr{X}$ of $\Mod S$ and $N\in \Mod S$, the \textbf{$\mathscr{X}$-projective dimension}
$\mathscr{X}$-$\pd_SN$ of $N$ is defined as $\inf\{n\mid$ there exists an exact sequence
$$0 \to X_n \to \cdots \to X_1\to X_0 \to N\to 0$$
in $\Mod S$ with all $X_i\in\mathscr{X}\}$, and we set $\mathscr{X}$-$\pd_SN$ infinite
if no such integer exists. We call $\mathcal{A}_{C}(S)$-$\pd_SN$ the \textbf{Auslander projective dimension}
of $N$. For any $n\geq 0$, we use $\Omega^n(N)$ to denote the $n$-th syzygy of $N$ (note: $\Omega^0(N)=N$).

\begin{lemma}\label{4.1}
Let $N\in\Mod S$ and $n\geq 0$. If $\mathcal{A}_{C}(S)$-$\pd_SN\leq n$ and
$$0\to K_n \to A_{n-1} \to \cdots \to A_1\to A_0 \to N\to 0$$
be an exact sequence in $\Mod S$ with all $A_i$ in $\mathcal{A}_{C}(S)$,
then $K_n\in\mathcal{A}_{C}(S)$; in particular, $\Omega^n(N)\in\mathcal{A}_C(S)$.
\end{lemma}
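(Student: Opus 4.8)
The plan is to prove this by a standard ``syzygy patching'' argument, using that $\A_C(S)$ is projectively resolving (see Theorem~\ref{3.5}(1), whose symmetric counterpart is Theorem~\ref{3.3}), i.e.\ it contains all projective modules and is closed under extensions and kernels of epimorphisms. First I would reduce to the case $n\geq 1$, since for $n=0$ the statement just says $N\in\A_C(S)$, which is immediate. Since $\A_C(S)$-$\pd_S N\leq n$, there is an exact sequence $0\to A'_n\to A'_{n-1}\to\cdots\to A'_0\to N\to 0$ with all $A'_i\in\A_C(S)$. The given sequence $0\to K_n\to A_{n-1}\to\cdots\to A_0\to N\to 0$ also has all $A_i\in\A_C(S)$, and my goal is to show the extra kernel term $K_n$ lands in $\A_C(S)$ as well.

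The key step is a generalized Schanuel / comparison argument applied degree by degree. Let $K_i=\operatorname{Ker}(A_{i-1}\to A_{i-2})$ (with the usual conventions $K_0=N$, and $K_n$ as given), and similarly $K'_i$ for the primed resolution; in particular $K'_n=A'_n\in\A_C(S)$. Working from the top, Schanuel's lemma gives that $K_1\oplus(\text{projective})\cong K'_1\oplus(\text{projective})$ when we compare two resolutions of $N$ of the same length — but here it is cleaner to instead build, by the horseshoe/comparison lemma, a resolution that splices the two, or simply to argue inductively: one shows $K_i\in\A_C(S)$ for all $i\geq 1$ by descending... actually ascending induction is easier. For the induction, the cleanest route is: by the generalized Schanuel lemma, for each $i$ there is an isomorphism $K_i\oplus P\cong K'_i\oplus P'$ with $P,P'$ projective (since the two partial resolutions agree at $N$ and all terms $A_j,A'_j$ are in $\A_C(S)\supseteq$ projectives). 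Apply this at $i=n$: $K_n\oplus P\cong A'_n\oplus P'$. The right-hand side is in $\A_C(S)$ because $A'_n\in\A_C(S)$, $P'$ is projective hence in $\A_C(S)$, and $\A_C(S)$ is closed under finite direct sums (Theorem~\ref{3.5}(1) gives it is even closed under arbitrary coproducts and direct summands). Hence $K_n\oplus P\in\A_C(S)$, and since $\A_C(S)$ is closed under direct summands, $K_n\in\A_C(S)$.

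To make the Schanuel step legitimate I need the two partial resolutions to have matching length ending at $N$; the primed one has length $n$ by hypothesis on the Auslander projective dimension, and the given one also has length $n$ ($0\to K_n\to A_{n-1}\to\cdots\to A_0\to N\to 0$), so the generalized Schanuel lemma applies directly to compare the $n$-th kernels $K_n$ and $K'_n=A'_n$. Finally, for the ``in particular'' clause: take the given sequence to be (a truncation of) a genuine projective resolution of $N$, so that $A_i=P_i$ is projective for $0\leq i\leq n-1$ and $K_n=\Omega^n(N)$; since projectives lie in $\A_C(S)$, the hypotheses of the lemma are met and the conclusion $\Omega^n(N)\in\A_C(S)$ follows. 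The only mild subtlety — the ``main obstacle'' — is making sure the closure properties invoked (direct summands, finite direct sums) are exactly the ones recorded for $\A_C(S)$ in \cite[Proposition 4.2(a)]{HW07} as cited in the proof of Theorem~\ref{3.3}; these are indeed available, so the argument goes through. One should also note that projectivity of the comparison terms $P,P'$ is what lets us absorb them into $\A_C(S)$, using that $\A_C(S)$ contains all projectives (part of it being projectively resolving, Theorem~\ref{3.5}(1)).
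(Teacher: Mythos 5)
There is a genuine gap at the central step. You invoke a ``generalized Schanuel lemma'' to get $K_n\oplus P\cong A_n'\oplus P'$ with $P,P'$ projective, justifying it by the remark that all the middle terms $A_j,A_j'$ lie in $\mathcal{A}_C(S)\supseteq\{\text{projectives}\}$. But Schanuel's lemma (and its length-$n$ version) needs the middle terms themselves to be \emph{projective}; it is not enough that they belong to a class containing the projectives, since the proof of Schanuel splits comparison sequences by lifting along projectives. With non-projective middle terms the claimed stable isomorphism is simply false: compare the two exact sequences $0\to 0\to A\to A\to 0$ and $0\to \Omega^1(A)\to P_0\to A\to 0$ for a non-projective $A\in\mathcal{A}_C(S)$ of projective dimension $\geq 2$ (such modules exist in general, e.g.\ suitable flat modules lie in $\mathcal{A}_C(S)$); your claim would give $\Omega^1(A)\oplus P\cong P'$ with $P,P'$ projective, forcing $\pd_S A\leq 1$, a contradiction. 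So the isomorphism $K_n\oplus P\cong A_n'\oplus P'$ cannot be the engine of the proof, and the closure properties you then use (finite direct sums, direct summands, containment of projectives) do not by themselves yield $K_n\in\mathcal{A}_C(S)$.

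The correct comparison of the two $\mathcal{A}_C(S)$-resolutions must use the closure of $\mathcal{A}_C(S)$ under \emph{extensions and kernels of epimorphisms} (projectively resolving, \cite[Theorem 6.2]{HW07}) together with closure under direct summands, which is exactly how the paper argues: it quotes \cite[Lemma 3.12]{AB}, the Auslander--Bridger comparison lemma for resolutions by objects of a projectively resolving, summand-closed class. Concretely, the repair is an induction on $n$ via pullbacks rather than Schanuel: for $n=1$, given $0\to K_1\to A_0\to N\to 0$ and $0\to A_1'\to A_0'\to N\to 0$, form the pullback $Y$ of $A_0\to N\leftarrow A_0'$; the exact sequence $0\to A_1'\to Y\to A_0\to 0$ puts $Y\in\mathcal{A}_C(S)$ by extension-closure, and then $0\to K_1\to Y\to A_0'\to 0$ gives $K_1\in\mathcal{A}_C(S)$ by closure under kernels of epimorphisms; iterating (or quoting \cite[Lemma 3.12]{AB} directly, after padding the finite resolution to length exactly $n$) proves the lemma, and the ``in particular'' clause then follows as you say, since projectives lie in $\mathcal{A}_C(S)$. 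So your overall plan (compare the given sequence with a finite $\mathcal{A}_C(S)$-resolution and use closure properties) matches the paper's, but the key comparison step as written is invalid and must be replaced by the pullback/Auslander--Bridger argument.
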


\begin{proof}
Because $\mathcal{A}_{C}(S)$ is projectively resolving and is closed under direct summands and coproducts
by \cite[Theorem 6.2 and Proposition 4.2(a)]{HW07}, the assertion follows from \cite[Lemma 3.12]{AB}.
\end{proof}

We use $\mathcal{A}_{C}(S)$-$\pd^{<\infty}$ to denote the subcategory of $\Mod S$ consisting of modules
with finite Auslander projective dimension.

\begin{proposition}\label{4.2}
$\mathcal{A}_{C}(S)$-$\pd^{<\infty}$ is closed under extensions, kernels of epimorphisms and
cokernels of monomorphisms.
\end{proposition}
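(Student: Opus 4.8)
The plan is to show that the class $\mathcal{A}_C(S)$-$\pd^{<\infty}$ inherits the relevant closure properties from $\mathcal{A}_C(S)$ itself. Recall from Lemma \ref{4.1} (via \cite[Theorem 6.2 and Proposition 4.2(a)]{HW07}) that $\mathcal{A}_C(S)$ is projectively resolving and closed under direct summands and coproducts; in particular it is closed under extensions, kernels of epimorphisms, and contains all projectives, and dually one checks it is closed under cokernels of monomorphisms since the Foxby equivalence identifies $\mathcal{A}_C(S)$ with the injectively coresolving class $\mathcal{B}_C(R)$ restricted appropriately. The standard dimension-shifting technique then lifts each closure property from $\mathcal{A}_C(S)$ to the class of objects of finite $\mathcal{A}_C(S)$-projective dimension.

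Concretely, I would argue as follows. Suppose $0\to N'\to N\to N''\to 0$ is a short exact sequence in $\Mod S$. For the \emph{extension} case, assume $N',N''\in\mathcal{A}_C(S)$-$\pd^{<\infty}$; take projective resolutions and use the horseshoe lemma to build a resolution of $N$ whose terms are projective (hence in $\mathcal{A}_C(S)$), and whose $m$-th syzygy sits in a short exact sequence $0\to\Omega^m(N')\to\Omega^m(N)\to\Omega^m(N'')\to 0$. Choosing $m$ larger than both $\mathcal{A}_C(S)$-$\pd_S N'$ and $\mathcal{A}_C(S)$-$\pd_S N''$, Lemma \ref{4.1} gives $\Omega^m(N'),\Omega^m(N'')\in\mathcal{A}_C(S)$, and since $\mathcal{A}_C(S)$ is closed under extensions we get $\Omega^m(N)\in\mathcal{A}_C(S)$, so $\mathcal{A}_C(S)$-$\pd_S N\leq m<\infty$. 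For the \emph{kernel of an epimorphism} case, assume $N,N''\in\mathcal{A}_C(S)$-$\pd^{<\infty}$; the same syzygy sequence together with the fact that $\mathcal{A}_C(S)$ is closed under kernels of epimorphisms forces $\Omega^m(N')\in\mathcal{A}_C(S)$ for large $m$, whence $N'$ has finite Auslander projective dimension. The \emph{cokernel of a monomorphism} case is dual: assume $N',N\in\mathcal{A}_C(S)$-$\pd^{<\infty}$; use the syzygy sequence and the fact that $\mathcal{A}_C(S)$ is closed under cokernels of monomorphisms to conclude $\Omega^m(N'')\in\mathcal{A}_C(S)$ for $m$ large.

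The only genuine subtlety is justifying that $\mathcal{A}_C(S)$ is closed under cokernels of monomorphisms, which is needed for the third property; this is not literally the statement ``projectively resolving'' but follows from the Foxby equivalence $C\otimes_S-:\mathcal{A}_C(S)\xrightarrow{\sim}\mathcal{B}_C(R)$ together with the fact (from \cite[Theorem 6.2]{HW07} applied on the Bass side, or from the hereditary cotorsion pair structure of Corollary \ref{3.6}) that $\mathcal{B}_C(R)$ is injectively coresolving, hence closed under cokernels of monomorphisms; transporting back along the equivalence — which is exact on the relevant short exact sequences with terms in these classes — yields the claim. Everything else is a routine application of dimension shifting, so I expect no real obstacle beyond bookkeeping the three cases symmetrically.
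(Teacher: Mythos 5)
Your first two cases are fine: the horseshoe syzygy sequence $0\to\Omega^m(N')\to\Omega^m(N)\to\Omega^m(N'')\to 0$, Lemma \ref{4.1}, and the fact that $\mathcal{A}_C(S)$ is projectively resolving (\cite[Theorem 6.2]{HW07}) give extensions and kernels of epimorphisms, and for the latter your route is actually more elementary than the paper's, which goes through $\Hom_S(\mathcal{A}_C(S),-)$-exact resolutions (via Corollary \ref{3.6}) and \cite[Theorems 3.2 and 3.6]{Hu}. The genuine gap is the third case: you need $\mathcal{A}_C(S)$ to be closed under cokernels of monomorphisms, and this is neither justified by your Foxby argument nor true in general. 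Given $0\to A'\to A\to X\to 0$ with $A',A\in\mathcal{A}_C(S)$, applying $C\otimes_S-$ is obstructed by $\Tor_1^S(C,X)$, which need not vanish; asserting that the equivalence is ``exact on the relevant short exact sequences'' presupposes $X\in\mathcal{A}_C(S)$ (or at least $X\in{C_S}^{\top}$), which is exactly what is to be proved, so the argument is circular. Moreover the closure property itself fails in general: since $\mathcal{A}_C(S)$ contains the projectives, closure under cokernels of monomorphisms together with Lemma \ref{4.1} and induction along a projective resolution would give $\mathcal{A}_C(S)$-$\pd^{<\infty}=\mathcal{A}_C(S)$, i.e.\ every module of finite Auslander projective dimension would have dimension $0$. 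This collapses the theory of Section 4 and is contradicted, for instance, by a non-projective classical tilting bimodule $_RC_S$ of projective dimension one: there $\mathcal{A}_C(S)=\{N\mid\Tor_1^S(C,N)=0\}$ is the torsion-free class of the induced torsion pair (closed under submodules and extensions, but not under quotients), while Corollary \ref{4.11} gives $\mathcal{A}_C(S)$-$\pd_SN\leq 1$ for every $N\in\Mod S$.

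This is precisely why the paper proves the cokernel case differently and only obtains the bound $n+1$ rather than $n$: it uses proper ($\Hom_S(\mathcal{A}_C(S),-)$-exact) $\mathcal{A}_C(S)$-resolutions of $N'$ and $N$ and the comparison result \cite[Theorem 3.6]{Hu}. Your own setup admits a cheaper repair that avoids the false lemma: for $m$ large the horseshoe sequence has $\Omega^m(N'),\Omega^m(N)\in\mathcal{A}_C(S)$, so splicing it with the chosen projective resolution of $N''$ yields an exact sequence $0\to\Omega^m(N')\to\Omega^m(N)\to P''_{m-1}\to\cdots\to P''_0\to N''\to 0$ with all terms in $\mathcal{A}_C(S)$, whence $\mathcal{A}_C(S)$-$\pd_SN''\leq m+1<\infty$. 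With that substitution your proof is correct; as written, the third case rests on a closure property that $\mathcal{A}_C(S)$ does not have.
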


\begin{proof}
Let $$0\to N_1 \to N_2 \to N_3 \to 0$$
be an exact sequence in $\Mod S$ and $n\geq 0$. If
$\max\{\mathcal{A}_{C}(S)$-$\pd_SN_1,\mathcal{A}_{C}(S)$-$\pd_SN_3\}\leq n$,
then by Lemma \ref{4.1}, there exist exact sequences
$$0\to \Omega^n(N_1)\to P^{n-1}_1 \to \cdots\to P^{1}_1 \to P^{0}_1 \to N_1 \to 0,$$
$$0\to \Omega^n(N_3)\to P^{n-1}_3 \to \cdots\to P^{1}_3 \to P^{0}_3 \to N_3 \to 0$$
in $\Mod S$ with all $P_i^j$ projective and $\Omega^n(N_1),\Omega^n(N_3)\in\mathcal{A}_{C}(S)$.
Then we get exact sequences
$$0\to K_n\to P^{n-1}_1\oplus P^{n-1}_3 \to \cdots\to P^{1}_1 \oplus P^{1}_3\to P^{0}_1\oplus P^{0}_3 \to N_2 \to 0,$$
$$0\to \Omega^n(N_1)\to K_n \to  \Omega^n(N_3)\to 0$$
in $\Mod S$. By \cite[Theorem 6.2]{HW07}, we have $K_n\in\mathcal{A}_{C}(S)$ and
$\mathcal{A}_{C}(S)$-$\pd_SN_2\leq n$.

If $\max\{\mathcal{A}_{C}(S)$-$\pd_SN_1,\mathcal{A}_{C}(S)$-$\pd_SN_2\}\leq n$, then by Corollary \ref{3.6}
and Lemma \ref{4.1}, there exist $\Hom_S(\mathcal{A}_{C}(S),-)$-exact exact sequences
$$0\to A^{n}_1\to A^{n-1}_1 \to \cdots\to A^{1}_1 \to A^{0}_1 \to N_1 \to 0,$$
$$0\to A^{n}_2\to A^{n-1}_2 \to \cdots\to A^{1}_2 \to A^{0}_2 \to N_2 \to 0$$
in $\Mod S$ with all $A_i^j$ in $\mathcal{A}_{C}(S)$. By \cite[Theorem 3.6]{Hu}, we get an exact sequence
$$0\to A^n_1\to A^{n-1}_1\oplus A^{n}_2 \to \cdots\to A^{0}_1 \oplus A^{1}_2\to A^{0}_2 \to N_3 \to 0$$
in $\Mod S$, and so $\mathcal{A}_{C}(S)$-$\pd_SN_3\leq n+1$.

If $\max\{\mathcal{A}_{C}(S)$-$\pd_SN_2,\mathcal{A}_{C}(S)$-$\pd_SN_3\}\leq n$, then by Corollary \ref{3.6}
and Lemma \ref{4.1}, there exist $\Hom_S(\mathcal{A}_{C}(S),-)$-exact exact sequences
$$0\to A^{n}_2\to A^{n-1}_2 \to \cdots\to A^{1}_2 \to A^{0}_2 \to N_2 \to 0,$$
$$0\to A^{n}_3\to A^{n-1}_3 \to \cdots\to A^{1}_3 \to A^{0}_3 \to N_3 \to 0$$
in $\Mod S$ with all $A_i^j$ in $\mathcal{A}_{C}(S)$. By \cite[Theorem 3.2]{Hu}, we get exact sequences
$$0\to A^n_2\to A^{n-1}_2\oplus A^{n}_3 \to \cdots\to A^{1}_2 \oplus A^{2}_3\to A \to N_1 \to 0,$$
$$0\to A \to A_{2}^0 \oplus A_{3}^1\to A^{0}_3 \to 0$$
in $\Mod S$. By \cite[Theorem 6.2]{HW07}, we have $A\in\mathcal{A}_{C}(S)$,
and so $\mathcal{A}_{C}(S)$-$\pd_SN_1\leq n$.
\end{proof}

We write
$$\mathcal{I}_C(S):=\{I_*\mid I\ {\rm \ is\ injective\ in}\ \Mod R\}.$$
The modules in $\mathcal{I}_C(S)$ is called {\bf $C$-injective} (\cite{HW07}).
Let $Q$ be an injective cogenerator for $\Mod R$. Then
$$\mathcal{I}_C(S)=\Prod_SQ_*$$ by \cite[Proposition 2.4(2)]{LHX},
where $\Prod_SQ_*$ is the subcategory of $\Mod S$ consisting of direct summands
of products of copies of $Q_*$. By \cite[Lemma 2.16(b)]{GT12}, we have the following
isomorphism of functors
$$\Hom_R(\Tor_i^S(C,-),Q)\cong\Ext_S^i(-,Q_*)$$
for any $i\geq 1$. This gives the following

\begin{lemma}\label{4.3}
${C_S}^{\top}={^{\bot}\mathcal{I}_C(S)}$.
\end{lemma}

For a subcategory $\mathscr{X}$ of $\Mod S$, a sequence in $\Mod S$ is called
{\bf $\Hom_{S}(-,\mathscr{X})$-exact} if it is exact after applying the functor
$\Hom_{S}(-,X)$ for any $X\in\mathscr{X}$.
Now we give some criteria for computing the Auslander projective dimension of modules.

\begin{theorem}\label{4.4}
Let $N\in\Mod S$ with $\mathcal{A}_{C}(S)$-$\pd_SN<\infty$ and $n\geq 0$.
Then the following statements are equivalent.
\begin{enumerate}
\item[(1)] $\mathcal{A}_{C}(S)$-$\pd_SN\leq n$.
\item[(2)] $\Omega^n(N)\in\mathcal{A}_{C}(S)$.
\item[(3)] $\Tor^S_{\geq n+1}(C,N)=0$.
\item[(4)] There exists an exact sequence
$$0\to H \to A \to N\to 0$$
in $\Mod S$ with $A\in \mathcal{A}_{C}(S)$ and $\mathcal{I}_C(S)$-$\pd_SH\leq n-1$.
\item[(5)] There exists a ($\Hom_{S}(-,\mathcal{I}_C(S))$-exact) exact sequence
$$0\to N \to H^{'} \to A^{'} \to 0$$
in $\Mod S$ with $A^{'}\in \mathcal{A}_{C}(S)$ and $\mathcal{I}_C(S)$-$\pd_SH^{'}\leq n$.
\end{enumerate}
\end{theorem}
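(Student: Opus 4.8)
The plan is to prove $(1)\Leftrightarrow(2)\Rightarrow(3)$, then the harder converse $(3)\Rightarrow(2)$ in which the hypothesis $\mathcal{A}_{C}(S)$-$\pd_SN<\infty$ is essential, and finally to obtain $(4)$ and $(5)$ by comparison with $(1)$ and $(3)$ via an Auslander--Buchweitz argument. I use throughout that $\mathcal{A}_C(S)$ is resolving (contains the projectives, is closed under extensions, kernels of epimorphisms and direct summands) by \cite[Theorem 6.2 and Proposition 4.2(a)]{HW07}, that $C\otimes_S-$ and $\Hom_R(C,-)$ restrict to the Foxby equivalence $\mathcal{A}_C(S)\simeq\mathcal{B}_C(R)$ of \cite[Proposition 4.1]{HW07}, and that $\mathcal{B}_C(R)$ contains the injective $R$-modules and is closed under extensions and cokernels of monomorphisms. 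Now $(1)\Rightarrow(2)$ is Lemma \ref{4.1}; $(2)\Rightarrow(1)$ follows by truncating a projective resolution of $N$ at step $n$, since all projectives lie in $\mathcal{A}_C(S)$; and $(2)\Rightarrow(3)$ follows from the dimension-shift isomorphism $\Tor_i^S(C,N)\cong\Tor_{i-n}^S(C,\Omega^n(N))$ for $i\geq n+1$, whose right-hand side vanishes because $\Omega^n(N)\in\mathcal{A}_C(S)$ satisfies condition (a1) of Definition \ref{3.1}.

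For $(3)\Rightarrow(2)$, put $X=\Omega^n(N)$. By (3) and dimension shifting $\Tor_{\geq1}^S(C,X)=0$, and $X$ has finite $\mathcal{A}_C(S)$-projective dimension; hence it suffices to prove: \emph{if $\mathcal{A}_C(S)$-$\pd_SX<\infty$ and $\Tor_{\geq1}^S(C,X)=0$, then $X\in\mathcal{A}_C(S)$}. I would argue by induction on $k=\mathcal{A}_C(S)$-$\pd_SX$, the case $k=0$ being trivial. For $k\geq1$ choose $0\to X_1\to P\to X\to0$ with $P$ projective; then $\Tor_{\geq1}^S(C,X_1)=0$ and, by Lemma \ref{4.1} applied to a projective resolution, $\mathcal{A}_C(S)$-$\pd_SX_1\leq k-1$, so $X_1\in\mathcal{A}_C(S)$ by induction, and it remains to prove the sub-claim: \emph{if $0\to A'\to A\to A''\to0$ is exact with $A',A\in\mathcal{A}_C(S)$ and $\Tor_1^S(C,A'')=0$, then $A''\in\mathcal{A}_C(S)$}. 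For this, condition (a1) for $A''$ follows from the long exact sequence of $\Tor^S(C,-)$; since $\Tor_1^S(C,A'')=0$ the sequence $0\to C\otimes_SA'\to C\otimes_SA\to C\otimes_SA''\to0$ is exact in $\Mod R$ with its first two terms in $\mathcal{B}_C(R)$, so $C\otimes_SA''\in\mathcal{B}_C(R)$ (cokernels of monomorphisms), which is condition (a2) for $A''$; finally, applying $\Hom_R(C,-)$ to that sequence -- which stays exact because $\Ext^1_R(C,C\otimes_SA')=0$ -- and comparing with $0\to A'\to A\to A''\to0$ through the natural maps $\mu_{A'},\mu_A,\mu_{A''}$ (a commutative ladder with exact rows in which $\mu_{A'},\mu_A$ are isomorphisms), the five lemma makes $\mu_{A''}$ an isomorphism, i.e.\ condition (a3) for $A''$. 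Hence $X\in\mathcal{A}_C(S)$, which gives $(3)\Rightarrow(2)$, and with $(2)\Rightarrow(1)$ also $(3)\Rightarrow(1)$.

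For $(4)$ and $(5)$, I would first check that $(\mathcal{A}_C(S),\mathcal{I}_C(S))$ is an Auslander--Buchweitz context in the sense of \cite{AB}: $\mathcal{I}_C(S)=\Prod_SQ_*\subseteq\mathcal{A}_C(S)$ (the Foxby equivalence sends the injective $R$-modules, which lie in $\mathcal{B}_C(R)$, into $\mathcal{A}_C(S)$); $\Ext^{\geq1}_S(\mathcal{A}_C(S),\mathcal{I}_C(S))=0$ because $\mathcal{A}_C(S)\subseteq{C_S}^{\top}={}^{\bot}\mathcal{I}_C(S)$ by Lemma \ref{4.3}; and every $A\in\mathcal{A}_C(S)$ fits in a short exact sequence $0\to A\to J\to A^{\circ}\to0$ with $J\in\mathcal{I}_C(S)$ and $A^{\circ}\in\mathcal{A}_C(S)$, obtained by applying $\Hom_R(C,-)$ to an injective-hull sequence $0\to C\otimes_SA\to I\to I/(C\otimes_SA)\to0$ in $\Mod R$ and using $\mu_A:A\xrightarrow{\sim}(C\otimes_SA)_*$, $\Ext^1_R(C,C\otimes_SA)=0$, and closure of $\mathcal{B}_C(R)$ under cokernels of monomorphisms. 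Assuming $(1)$, so that $\mathcal{A}_C(S)$-$\pd_SN\leq n$, the approximation theory of \cite{AB} applied to this context produces an exact sequence $0\to H\to A\to N\to0$ with $A\in\mathcal{A}_C(S)$ and $\mathcal{I}_C(S)$-$\pd_SH\leq n-1$, and a $\Hom_S(-,\mathcal{I}_C(S))$-exact exact sequence $0\to N\to H'\to A'\to0$ with $A'\in\mathcal{A}_C(S)$ and $\mathcal{I}_C(S)$-$\pd_SH'\leq n$ (the $\Hom$-exactness because $\Ext^1_S(A',\mathcal{I}_C(S))=0$); these are $(4)$ and $(5)$. Conversely, from a sequence as in $(4)$ or in $(5)$, the inclusion $\mathcal{I}_C(S)\subseteq{C_S}^{\top}$ and dimension shifting yield $\Tor_{\geq n}^S(C,H)=0$, respectively $\Tor_{\geq n+1}^S(C,H')=0$, and then the long exact sequence of $\Tor^S(C,-)$ attached to the short exact sequence, together with $\Tor_{\geq1}^S(C,A)=0$, respectively $\Tor_{\geq1}^S(C,A')=0$, forces $\Tor_{\geq n+1}^S(C,N)=0$; this is $(3)$, closing the cycle.

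I expect the main obstacle to be the sub-claim inside $(3)\Rightarrow(2)$, i.e.\ upgrading the vanishing of $\Tor_{\geq1}^S(C,X)$ to full membership $X\in\mathcal{A}_C(S)$ and in particular verifying condition (a3): this is precisely where finiteness of $\mathcal{A}_C(S)$-$\pd_SN$ is used, since it is what lets the induction terminate, and the statement does fail without it. A secondary technical point is the verification that $(\mathcal{A}_C(S),\mathcal{I}_C(S))$ is an Auslander--Buchweitz context, which comes down to transporting injective hulls from $\Mod R$ through the Foxby equivalence.
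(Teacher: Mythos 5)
Your proof is correct, but it takes a genuinely different route from the paper at the two places where the real work happens. For $(3)\Rightarrow(2)$ the paper does not induct on the $\mathcal{A}_C(S)$-projective dimension: it takes a finite $\mathcal{A}_C(S)$-resolution of $\Omega^n(N)$, notes it is $\Hom_S(-,\mathcal{I}_C(S))$-exact since $\mathcal{A}_C(S)\subseteq{}^{\bot}\mathcal{I}_C(S)$, and then invokes the characterization of $\mathcal{A}_C(S)$ by proper $\mathcal{I}_C(S)$-coresolutions (\cite[Theorem 3.11(1)]{TH}) together with a gluing result for coresolutions (\cite[Corollary 3.5]{Hu}) to manufacture such a coresolution of $\Omega^n(N)$ and conclude membership. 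Your induction rests instead on the sub-claim that the cokernel of a monomorphism between modules of $\mathcal{A}_C(S)$ with vanishing $\Tor_1^S(C,-)$ lies again in $\mathcal{A}_C(S)$, verified condition by condition via the Foxby equivalence, closure of $\mathcal{B}_C(R)$ under cokernels of monomorphisms, and the snake lemma applied to the $\mu$-ladder; this is more elementary and self-contained, while the paper's route buys a technique (proper $\mathcal{I}_C(S)$-coresolutions) that it reuses in Theorem \ref{4.10}. For (4) and (5) the paper proves $(1)\Rightarrow(4)$ by citing \cite[Theorem 4.7]{Hu2} (with $\mathcal{I}_C(S)$ an $\mathcal{I}_C(S)$-coproper cogenerator for $\mathcal{A}_C(S)$), then $(4)\Rightarrow(5)$ by a pushout and $(5)\Rightarrow(1)$ by a pullback plus closure of $\mathcal{A}_C(S)$ under kernels of epimorphisms; you derive both (4) and (5) from (1) by Auslander--Buchweitz approximation in the context you verify, and close the cycle by the Tor computations $(4)\Rightarrow(3)$ and $(5)\Rightarrow(3)$, which are correct. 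Two small caveats: the approximation theorem you invoke is Auslander--Buchweitz's (the paper's substitute is \cite[Theorem 4.7]{Hu2}), not \cite{AB}, which in this bibliography is Auslander--Bridger's Stable Module Theory; and your detour $(4),(5)\Rightarrow(3)\Rightarrow(1)$ re-uses the standing finiteness hypothesis, whereas the paper's $(4)\Rightarrow(5)\Rightarrow(1)$ does not (the paper records after the proof that finiteness is only needed for $(3)\Rightarrow(2)$). Neither point affects the correctness of the theorem as stated.
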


\begin{proof}
By Lemma \ref{4.1} and the dimension shifting, we have $(1)\Leftrightarrow (2)\Rightarrow (3)$.

$(3)\Rightarrow (2)$ Because $\Tor^S_{\geq n+1}(C,N)=0$ by (3), we have $\Omega^n(N)\in{C_S}^\top$,
and so $\Omega^n(N)\in{^{\bot}\mathcal{I}_C(S)}$ by Lemma \ref{4.3}.
Note that all projective modules in $\Mod S$ are in $\mathcal{A}_C(S)$ by \cite[Theorem 6.2]{HW07}.
Because $\mathcal{A}_{C}(S)$-$\pd_SN<\infty$ by assumption,
we have $\mathcal{A}_{C}(S)$-$\pd_S\Omega^n(N)<\infty$ by Proposition 4.2.

Assume that $\mathcal{A}_{C}(S)$-$\pd_S\Omega^n(N)=m(<\infty)$ and
$$0\to A_m \to \cdots \to A_1 \to A_0 \to \Omega^n(N) \to 0 \eqno{(4.1)}$$
is an exact sequence in $\Mod S$ with all $A_j$ in $\mathcal{A}_C(S)$.
Because $\mathcal{A}_C(S)\subseteq {C_S}^{\top}={^{\bot}\mathcal{I}_C(S)}$ by
Lemma \ref{4.3}, the exact sequence (4.1) is $\Hom_{S}(-,\mathcal{I}_C(S))$-exact.
By \cite[Theorem 3.11(1)]{TH}, we have the following $\Hom_{S}(-,\mathcal{I}_C(S))$-exact exact sequence
$$0\to A_j \to U_j^0\to U_j^1 \to \cdots \to U_j^i \to \cdots$$
in $\Mod S$ with all $U_j^i$ in $\mathcal{I}_C(S)$ for any $0\leq j\leq m$ and $i\geq 0$.
It follows from \cite[Corollary 3.5]{Hu} that there exist
the following two exact sequences
$$0\to \Omega^n(N) \to U \to \oplus _{i=0}^mU_i^{i+1} \to \oplus_{i=0}^mU_i^{i+2}
\to \oplus_{i=0}^mU_i^{i+3} \to \cdots,$$
$$0\to U_m^0 \to U_m^1\oplus U_{m-1}^0 \to \cdots \to \oplus _{i=2}^mU_i^{i-2}
\to \oplus_{i=1}^mU_i^{i-1} \to \oplus _{i=0}^mU_i^i \to U \to 0,$$
and the former one is $\Hom_{S}(-,\mathcal{I}_C(S))$-exact. Because $\mathcal{I}_C(S)$ is closed
under finite direct sums and cokernels of monomorphisms by \cite[Proposition 5.1(c) and Corollary 6.4]{HW07},
we have $U\in\mathcal{I}_{C}(S)$. By \cite[Theorem 3.11(1)]{TH} again, we have
$\Omega^n(N)\in\mathcal{A}_{C}(S)$.

$(1)\Rightarrow (4)$ By \cite[Theorem 6.2]{HW07}, $\mathcal{A}_C(S)$ is closed under extensions.
By \cite[Theorem 3.11(1)]{TH}, we have that
$\mathcal{I}_C(S)$ is an $\mathcal{I}_C(S)$-coproper cogenerator
for $\mathcal{A}_C(S)$ in the sense of \cite{Hu2}. Then the assertion follows from \cite[Theorem 4.7]{Hu2}.

$(4)\Rightarrow (5)$ Let
$$0\to H \to A \to N\to 0$$
be an exact sequence in $\Mod S$ with $A\in \mathcal{A}_{C}(S)$ and $\mathcal{I}_C(S)$-$\pd_SH\leq n-1$.
By \cite[Theorem 3.11(1)]{TH}, there exists a $\Hom_{S}(-,\mathcal{I}_C(S))$-exact exact sequence
$$0\to A\to U\to A^{'}\to 0$$ in $\Mod S$ with $U\in\mathcal{I}_C(S)$
and $A^{'}\in \mathcal{A}_{C}(S)$. Consider the following push-out diagram
$$\xymatrix{
& & 0 \ar[d] &0 \ar@{-->}[d] &   \\
0 \ar[r]  & H \ar@{==}[d]  \ar[r] & A \ar[d]\ar[r]  &N \ar@{-->}[d] \ar[r] & 0 \\
0 \ar@{-->}[r] & H \ar@{-->}[r] & U \ar[d] \ar@{-->}[r] & H^{'} \ar@{-->}[d] \ar@{-->}[r] & 0  \\
&  & A^{'} \ar[d]\ar@{==}[r] & A^{'}\ar@{-->}[d]  &   \\
&  & 0  & 0.   &   } $$
By the middle row in this diagram, we have $\mathcal{I}_C(S)$-$\pd_SH^{'}\leq n$. Because the middle column
in the above diagram is $\Hom_{S}(-,\mathcal{I}_C(S))$-exact, the rightmost column is also
$\Hom_{S}(-,\mathcal{I}_C(S))$-exact by \cite[Lemma 2.4(2)]{Hu} and it is the desired exact sequence.

$(5)\Rightarrow (1)$ Let
$$0\to N \to H^{'} \to A^{'} \to 0$$
be an exact sequence in $\Mod S$ with $A^{'}\in \mathcal{A}_{C}(S)$ and $\mathcal{I}_C(S)$-$\pd_SH^{'}\leq n$.
Then there exists an exact sequence
$$0\ra U_n\ra \cdots\ra U_1\ra U_0\ra H^{'}\ra 0$$ in $\Mod S$ with all $U_i$ in $\mathcal{I}_C(S)$.
Set $H:=\mbox{Ker}(U_0\ra H^{'})$.
Then $\mathcal{I}_C(S)$-$\pd_SH\leq n-1$. Consider the following pull-back diagram
$$\xymatrix@R=20pt@C=20pt{& 0 \ar@{-->}[d] & 0 \ar[d]&& &\\
& H \ar@{==}[r] \ar@{-->}[d] & H \ar[d]& &&\\
0 \ar@{-->}[r] & A \ar@{-->}[d] \ar@{-->}[r] & U_0 \ar[d] \ar@{-->}[r] &A^{'} \ar@{==}[d] \ar@{-->}[r] & 0\\
0 \ar[r] & N \ar@{-->}[d]\ar[r] & H^{'} \ar[r] \ar[d] & A^{'} \ar[r] & 0 &\\
& 0 & 0. & && }$$
Applying \cite[Theorem 6.2]{HW07} to the middle row in this diagram yields $A\in \mathcal{A}_{C}(S)$.
Thus $\mathcal{A}_{C}(S)$-$\pd_SN\leq n$ by the leftmost column in the above diagram.
\end{proof}

The only place where the assumption $\mathcal{A}_{C}(S)$-$\pd_SN<\infty$ in Theorem \ref{4.4} is used is in showing
$(3)\Rightarrow(2)$. By Theorem \ref{4.4}, it is easy to get the following standard observation.

\begin{corollary}\label{4.5}
Let $$0\to L \to M \to K\to 0$$
be an exact sequence in $\Mod S$. Then we have
\begin{enumerate}
\item[(1)] $\mathcal{A}_{C}(S)$-$\pd_{S}K \leq \max\{\mathcal{A}_{C}(S)$-$\pd_{S}M,\mathcal{A}_{C}(S)$-$\pd_{S}L+1\}$,
and the equality holds true if $\mathcal{A}_{C}(S)$-$\pd_{S}M \neq \mathcal{A}_{C}(S)$-$\pd_{S}L$.
\item[(2)] $\mathcal{A}_{C}(S)$-$\pd_{S}L \leq \max\{\mathcal{A}_{C}(S)$-$\pd_{S}M,\mathcal{A}_{C}(S)$-$\pd_{S}K-1\}$,
and the equality holds true if $\mathcal{A}_{C}(S)$-$\pd_{S}M\neq \mathcal{A}_{C}(S)$-$\pd_{S}K$.
\item[(3)] $\mathcal{A}_{C}(S)$-$\pd_{S}M \leq \max\{\mathcal{A}_{C}(S)$-$\pd_{S}L,\mathcal{A}_{C}(S)$-$\pd_{S}K\}$,
and the equality holds true if $\mathcal{A}_{C}(S)$-$\pd_{S}K \neq \mathcal{A}_{C}(S)$-$\pd_{S}L+1$.
\end{enumerate}
\end{corollary}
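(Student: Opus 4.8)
The plan is to deduce everything from Theorem \ref{4.4}, reducing the three inequalities to the long exact sequence of $\Tor$ in exactly the way one proves the analogous inequalities for ordinary projective dimension.

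First I would dispose of the cases in which some Auslander projective dimension is infinite. By Proposition \ref{4.2} the class of modules of finite Auslander projective dimension is closed under extensions, kernels of epimorphisms and cokernels of monomorphisms; since $M$, $L$, $K$ are respectively an extension of $K$ by $L$, the kernel of $M\twoheadrightarrow K$, and the cokernel of $L\hookrightarrow M$, as soon as two of the three modules have finite Auslander projective dimension so does the third. Hence, whenever the right-hand side of one of (1)--(3) is finite all three dimensions are finite, and otherwise the inequality is trivial; a short check on the $\pm\infty$ conventions settles the corresponding equality claims in the degenerate cases. So I may assume $a:=\mathcal{A}_{C}(S)$-$\pd_SL$, $b:=\mathcal{A}_{C}(S)$-$\pd_SM$ and $c:=\mathcal{A}_{C}(S)$-$\pd_SK$ are all finite.

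Next I would pass to $\Tor$. By Theorem \ref{4.4} (equivalence of (1) and (3)), a module $N$ of finite Auslander projective dimension satisfies $\mathcal{A}_{C}(S)$-$\pd_SN\leq n$ if and only if $\Tor^S_{\geq n+1}(C,N)=0$; equivalently $\mathcal{A}_{C}(S)$-$\pd_SN=\sup(\{0\}\cup\{i\geq 1\mid \Tor^S_i(C,N)\neq 0\})$. Applying $C\otimes_S-$ to $0\to L\to M\to K\to 0$ gives the long exact sequence
$$\cdots\to \Tor^S_i(C,L)\to \Tor^S_i(C,M)\to \Tor^S_i(C,K)\to \Tor^S_{i-1}(C,L)\to\cdots,$$
and the three inequalities come from noting that a term trapped between two vanishing terms vanishes: for (3) use $\Tor^S_i(C,L)\to\Tor^S_i(C,M)\to\Tor^S_i(C,K)$ for $i\geq\max\{a,c\}+1$; for (2) use $\Tor^S_{i+1}(C,K)\to\Tor^S_i(C,L)\to\Tor^S_i(C,M)$ for $i\geq\max\{b,c-1\}+1$; for (1) use $\Tor^S_i(C,M)\to\Tor^S_i(C,K)\to\Tor^S_{i-1}(C,L)$ for $i\geq\max\{b,a+1\}+1$.

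Finally, the equality statements follow formally from the three inequalities by a two-out-of-three argument requiring no further homological input. For instance, for (3): assuming $c\neq a+1$, suppose $b<\max\{a,c\}$; if $\max\{a,c\}=a$ then $b<a$ and $c-1\leq a-1$, so $\max\{b,c-1\}<a$, contradicting (2); if $\max\{a,c\}=c>a$ then $b<c$, so (1) forces $a+1\geq c$, hence $c=a+1$, a contradiction. The equalities in (1) and (2) are obtained in the same fashion, each time deriving a contradiction from the other two of the inequalities (1)--(3). I expect the only mild nuisance to be the bookkeeping with the index shifts by $\pm1$ and the $\pm\infty$ conventions; all of the real content is already contained in Theorem \ref{4.4} together with the long exact sequence above.
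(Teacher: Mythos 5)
Your proposal is correct and is exactly the argument the paper has in mind: the paper states Corollary \ref{4.5} as a ``standard observation'' from Theorem \ref{4.4}, and your fleshed-out version --- reducing to the case of finite dimensions via Proposition \ref{4.2} (which is the right way to handle the finiteness hypothesis in Theorem \ref{4.4}(3)$\Rightarrow$(1)), then reading the three inequalities off the long exact sequence of $\Tor^S_\ast(C,-)$, and getting the equality statements by the formal two-out-of-three argument --- is precisely that standard deduction. No gaps; the $\pm\infty$ bookkeeping you defer is indeed routine and works as you indicate.
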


The following corollary is an addendum to the implications $(1)\Rightarrow (4)$ and $(1)\Rightarrow (5)$ in Theorem \ref{4.4}.

\begin{corollary}\label{4.6}
Let $N\in\Mod S$ with $\mathcal{A}_{C}(S)$-$\pd_SN=n(<\infty)$.
Then there exist exact sequences
$$0\to H \to A \to N\to 0,$$
$$0\to N \to H^{'} \to A^{'} \to 0$$
in $\Mod S$ with $A,A^{'}\in \mathcal{A}_{C}(S)$ and $\mathcal{I}_C(S)$-$\pd_SH=\mathcal{I}_C(S)$-$\pd_SH^{'}=n$.
\end{corollary}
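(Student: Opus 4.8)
The plan is to extract the two sequences from the implications $(1)\Rightarrow(4)$ and $(1)\Rightarrow(5)$ of Theorem~\ref{4.4}, and then to sharpen the dimension bounds they carry to the stated equalities by feeding the resulting sequences back through the converse implications $(4)\Rightarrow(1)$, $(5)\Rightarrow(1)$ of that theorem, using the hypothesis that $\mathcal{A}_{C}(S)$-$\pd_SN$ equals $n$ and not less (and, where convenient, the elementary relations for $\mathcal{A}_C(S)$-$\pd$ in Corollary~\ref{4.5}). Throughout one uses that $\mathcal{A}_C(S)$ is projectively resolving and closed under coproducts (\cite[Theorem~6.2 and Proposition~4.2(a)]{HW07}), that $\mathcal{I}_C(S)\subseteq\mathcal{A}_C(S)$, and that the $\mathcal{I}_C(S)$-projective dimension of a finite direct sum is the maximum of those of the summands.

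\emph{The cokernel-side sequence.} Apply $(1)\Rightarrow(5)$ of Theorem~\ref{4.4}: there is a ($\Hom_{S}(-,\mathcal{I}_C(S))$-exact) exact sequence $0\to N\to H'\to A'\to 0$ in $\Mod S$ with $A'\in\mathcal{A}_C(S)$ and $\mathcal{I}_C(S)$-$\pd_SH'\leq n$. If one had $\mathcal{I}_C(S)$-$\pd_SH'\leq n-1$, then this very sequence would verify condition~(5) of Theorem~\ref{4.4} with $n$ replaced by $n-1$ (the proof of $(5)\Rightarrow(1)$ there uses only that $A'\in\mathcal{A}_C(S)$ and the bound on $\mathcal{I}_C(S)$-$\pd_SH'$, not the $\Hom_{S}(-,\mathcal{I}_C(S))$-exactness), whence $\mathcal{A}_{C}(S)$-$\pd_SN\leq n-1$, contradicting $\mathcal{A}_{C}(S)$-$\pd_SN=n$. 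Hence $\mathcal{I}_C(S)$-$\pd_SH'=n$. In the case $n=0$ one argues directly: then $N\in\mathcal{A}_C(S)$ is nonzero, $(1)\Rightarrow(5)$ yields $0\to N\to H'\to A'\to 0$ with $H'\in\mathcal{I}_C(S)$, and $N\hookrightarrow H'$ forces $H'\neq 0$, so $\mathcal{I}_C(S)$-$\pd_SH'=0$.

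\emph{The kernel-side sequence.} Apply $(1)\Rightarrow(4)$ of Theorem~\ref{4.4} to obtain $0\to H_0\to A_0\to N\to 0$ in $\Mod S$ with $A_0\in\mathcal{A}_C(S)$ and $\mathcal{I}_C(S)$-$\pd_SH_0\leq n-1$. When $n=0$ one uses instead the split sequence $0\to V\to V\oplus N\to N\to 0$ with any nonzero $V\in\mathcal{I}_C(S)$ (here $V\oplus N\in\mathcal{A}_C(S)$ and the kernel $V$ has $\mathcal{I}_C(S)$-$\pd_SV=0$). For $n\geq 1$ the idea is to produce an auxiliary module $M\in\mathcal{A}_C(S)$ with $\mathcal{I}_C(S)$-$\pd_SM=n$; given such an $M$, the direct sum of the sequence above with the trivial sequence $0\to M\xrightarrow{1_M}M\to 0\to 0$ is an exact sequence $0\to H_0\oplus M\to A_0\oplus M\to N\to 0$ in which $A_0\oplus M\in\mathcal{A}_C(S)$, and the kernel $H:=H_0\oplus M$ has $\mathcal{I}_C(S)$-$\pd_SH=n$ — being the larger of $\mathcal{I}_C(S)$-$\pd_SH_0\ (\leq n-1)$ and $\mathcal{I}_C(S)$-$\pd_SM\ (=n)$. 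A candidate for $M$ should come from the cokernel-side construction applied to a module of Auslander projective dimension exactly $n$, transported into $\mathcal{A}_C(S)$ via the Foxby equivalence $C\otimes_S-\colon\mathcal{A}_C(S)\xrightarrow{\ \sim\ }\mathcal{B}_C(R)$ and the coproper $\mathcal{I}_C(S)$-coresolutions of \cite[Theorem~3.11(1)]{TH}.

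\emph{Where the difficulty lies.} The cokernel-side statement is the easy half: it is just $(1)\Rightarrow(5)$ made sharp by $(5)\Rightarrow(1)$. The genuine work is on the kernel side, and precisely in producing the auxiliary module $M\in\mathcal{A}_C(S)$ with $\mathcal{I}_C(S)$-$\pd_SM=n$ (equivalently, through $C\otimes_S-$, a module in $\mathcal{B}_C(R)$ carrying a length-$n$ resolution by injective $R$-modules); Theorem~\ref{4.4}(4) by itself only supplies a kernel of $\mathcal{I}_C(S)$-$\pd\leq n-1$, so the extra resolution step must be created by hand while keeping the middle term in $\mathcal{A}_C(S)$ and preventing any $\mathcal{I}_C(S)$-projective dimension from running to infinity. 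Establishing the existence of such an $M$ and computing its $\mathcal{I}_C(S)$-projective dimension exactly is the crux; the coproper coresolutions of \cite[Theorem~3.11(1)]{TH}, the Foxby equivalence, and Corollary~\ref{4.5} are the tools for it.
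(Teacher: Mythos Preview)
Your argument for $H'$ is correct and essentially what the paper does; the paper phrases the lower bound via Corollary~\ref{4.5}(3) (obtaining $\mathcal{A}_C(S)$-$\pd_SH'=n$ and then squeezing with $\mathcal{A}_C(S)$-$\pd_SH'\leq\mathcal{I}_C(S)$-$\pd_SH'\leq n$), but your contrapositive through $(5)\Rightarrow(1)$ is equivalent.

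On the kernel side you are chasing a misprint. The paper's own proof establishes $\mathcal{I}_C(S)$-$\pd_SH=n-1$, not $n$, and the argument is short: take $0\to H\to A\to N\to 0$ from Theorem~\ref{4.4}(4) with $\mathcal{I}_C(S)$-$\pd_SH\leq n-1$; by Theorem~\ref{4.4}(1)$\Leftrightarrow$(3) one has $\sup\{i\mid\Tor_i^S(C,N)\neq 0\}=n$, and since $A\in\mathcal{A}_C(S)\subseteq{C_S}^{\top}$ the long exact Tor sequence gives $\sup\{i\mid\Tor_i^S(C,H)\neq 0\}=n-1$; because $\mathcal{A}_C(S)$-$\pd_SH\leq\mathcal{I}_C(S)$-$\pd_SH\leq n-1<\infty$, Theorem~\ref{4.4} applies to $H$ and yields $\mathcal{A}_C(S)$-$\pd_SH=n-1$, whence $n-1\leq\mathcal{I}_C(S)$-$\pd_SH\leq n-1$. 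So the intended statement is $\mathcal{I}_C(S)$-$\pd_SH=n-1$ and $\mathcal{I}_C(S)$-$\pd_SH'=n$, matching the bounds $n-1$ and $n$ already present in conditions (4) and (5) of Theorem~\ref{4.4}.

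Your padding strategy for $H$ --- direct-summing an auxiliary $M\in\mathcal{A}_C(S)$ with $\mathcal{I}_C(S)$-$\pd_SM=n$ --- is therefore aimed at the wrong target, and in any case you leave it unfinished: you never produce such an $M$, and nothing in the hypotheses guarantees one exists. (A module with $\mathcal{I}_C(S)$-$\pd=n$ certainly exists, namely $H'$, but $H'$ need not lie in $\mathcal{A}_C(S)$ when $n\geq 1$; your suggestion to pass through the Foxby equivalence and \cite[Theorem~3.11(1)]{TH} does not obviously manufacture the required finite $\mathcal{I}_C(S)$-resolution of the right length.) Once the statement is corrected to $n-1$, the whole detour disappears and the paper's Tor-shifting argument suffices.
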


\begin{proof}
Let $N\in\Mod S$ with $\mathcal{A}_{C}(S)$-$\pd_SN=n(<\infty)$.
By Theorem \ref{4.4}, there exists an exact sequence
$$0\to H \to A \to N\to 0$$
in $\Mod S$ with $A\in \mathcal{A}_{C}(S)$ and $(\mathcal{A}_{C}(S)$-$\pd_SH\leq)\mathcal{I}_C(S)$-$\pd_SH\leq n-1$.
By Theorem \ref{4.4} again, we have $\sup\{i\geq 0\mid \Tor^S_{i}(C,N)\neq 0\}=n$.
So $\sup\{i\geq 0\mid \Tor^S_{i}(C,H)\neq 0\}=n-1$, and hence
$\mathcal{A}_{C}(S)$-$\pd_SH=n-1$ by Theorem \ref{4.4}.
It follows that $\mathcal{I}_C(S)$-$\pd_SH=n-1$.

By Theorem \ref{4.4}, there exists an exact sequence
$$0\to N \to H^{'} \to A^{'} \to 0$$
in $\Mod S$ with $A^{'}\in \mathcal{A}_{C}(S)$ and $(\mathcal{A}_{C}(S)$-$\pd_SH\leq)\mathcal{I}_C(S)$-$\pd_SH^{'}\leq n$.
By Corollary \ref{4.5}(3), we have $\mathcal{A}_{C}(S)$-$\pd_SH=\mathcal{A}_{C}(S)$-$\pd_SN=n$,
and so $\mathcal{I}_C(S)$-$\pd_SH^{'}=n$.
\end{proof}

Let $N\in \Mod S$. Bican, El Bashir and Enochs proved in \cite{BBE} that $N$ has a flat cover. We use
$$\cdots \buildrel {f_{n+1}} \over \longrightarrow F_n(N) \buildrel {f_n} \over \longrightarrow \cdots
\buildrel {f_2} \over \longrightarrow F_1(N) \buildrel {f_1} \over \longrightarrow F_0(N) \buildrel {f_0}
\over \longrightarrow N \to 0\eqno{(4.2)}$$
to denote a minimal flat resolution of $N$ in $\Mod S$, where each $F_i(N)\to\Im f_i$ is a flat cover of $\Im f_i$.

\begin{lemma}\label{4.7}
Let $N\in\Mod S$ and $n\geq 0$. If $\Tor_{1\leq i\leq n}^S(C,N)=0$, then we have
\begin{enumerate}
\item[(1)] There exists an exact sequence
$$0\rightarrow \Ext_R^{n+1}(C,\Ker(1_C\otimes f_{n+1}))\rightarrow N\stackrel{\mu_{N}}{\longrightarrow}
(C\otimes_SN)_*\rightarrow\Ext_R^{n+2}(C,\Ker(1_C\otimes f_{n+1}))\rightarrow 0$$
in $\Mod S$.
\item[(2)] $\Ext_R^{1\leq i \leq n}(C,\Ker(1_C\otimes f_{n+1}))=0$.
\end{enumerate}
\end{lemma}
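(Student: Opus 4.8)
The plan is to tensor the given minimal flat resolution of $N$ with ${}_RC$ over $S$, transport everything through the valuation maps $\mu_{F_i}$, and then read off both assertions from a short chain of dimension-shifting long exact sequences; minimality of the resolution will play no role.

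First I would put $D_i:=C\otimes_SF_i$, $B_i:=\Im(1_C\otimes f_{i+1})\subseteq D_i$, and $K:=\Ker(1_C\otimes f_{n+1})\subseteq D_{n+1}$; note $1_C\otimes f_0\colon D_0\to C\otimes_SN$ is the cokernel map, so $\Ker(1_C\otimes f_0)=B_0$. Since $\Tor_i^S(C,N)=0$ for $1\le i\le n$, the complex $D_\bullet$ is exact at $D_1,\dots,D_n$, so it breaks into short exact sequences $0\to B_0\to D_0\to C\otimes_SN\to 0$, $0\to B_i\to D_i\to B_{i-1}\to 0$ for $1\le i\le n$, and $0\to K\to D_{n+1}\to B_n\to 0$. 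The essential input is that every flat $S$-module lies in $\mathcal{A}_C(S)$: this is standard, and easy to see since $\mathcal{A}_C(S)$ contains the projectives (\cite[Theorem 6.2]{HW07}), is closed under coproducts (\cite[Proposition 4.2(a)]{HW07}), and is closed under direct limits --- all three conditions of Definition \ref{3.1}(3) being preserved by direct limits, because ${}_RC$ admits a degreewise finite $R$-projective resolution --- hence it contains every flat module, a flat module being a direct limit of free modules. Thus each $F_i\in\mathcal{A}_C(S)$ for $0\le i\le n+1$, which by Definition \ref{3.1}(3) means precisely $\Ext_R^{\ge 1}(C,D_i)=0$ and that $\mu_{F_i}\colon F_i\to\Hom_R(C,D_i)=(D_i)_*$ is an isomorphism.

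Next I would apply $(-)_*=\Hom_R(C,-)$ to the three (families of) short exact sequences. Since $\Ext_R^{\ge 1}(C,D_i)=0$, each resulting long exact sequence collapses into a four-term exact sequence at the $\Hom/\Ext^1$ level together with the dimension-shift isomorphisms $\Ext_R^{\,j}(C,-)$ of the two outer terms shifted by one, valid for all $j\ge 1$. Simultaneously, naturality of $\mu$ turns $(\mu_{F_i})_{i\ge 0}$ into an isomorphism of complexes $F_\bullet\xrightarrow{\ \cong\ }(D_\bullet)_*$ that is compatible with the augmentations $1_C\otimes f_0$ and $\mu_N$; in particular $H_i((D_\bullet)_*)\cong H_i(F_\bullet)$, which is $0$ for $i\ge 1$ and $N$ for $i=0$, and a short diagram chase identifies the map $H_0((D_\bullet)_*)\to(C\otimes_SN)_*$ induced by $(1_C\otimes f_0)_*$, under the identification $H_0((D_\bullet)_*)\cong N$, with $\mu_N$ itself. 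Feeding this into the split long exact sequences yields: $\Coker\mu_N\cong\Ext_R^1(C,B_0)$ (from the bottom sequence); $\Ext_R^1(C,B_{i+1})\cong H_i((D_\bullet)_*)=0$ for $1\le i\le n-1$, so $\Ext_R^1(C,B_j)=0$ for $2\le j\le n$; $\Ext_R^1(C,K)\cong H_n((D_\bullet)_*)=0$ when $n\ge 1$; and $\Ker\mu_N\cong\Ext_R^1(C,B_1)$ when $n\ge 1$, respectively $\Ker\mu_N\cong\Ext_R^1(C,K)$ when $n=0$.

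Finally I would run the dimension shifts. Chaining $\Ext_R^1(C,B_0)\cong\Ext_R^2(C,B_1)\cong\cdots\cong\Ext_R^{n+1}(C,B_n)\cong\Ext_R^{n+2}(C,K)$ gives $\Coker\mu_N\cong\Ext_R^{n+2}(C,K)$, and chaining $\Ext_R^1(C,B_1)\cong\cdots\cong\Ext_R^{n}(C,B_n)\cong\Ext_R^{n+1}(C,K)$ gives $\Ker\mu_N\cong\Ext_R^{n+1}(C,K)$ (when $n=0$ this last isomorphism was already obtained directly); splicing $0\to\Ker\mu_N\to N\xrightarrow{\mu_N}(C\otimes_SN)_*\to\Coker\mu_N\to 0$ then proves (1). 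For (2), when $2\le j\le n$ the shift $\Ext_R^{\,j}(C,K)\cong\Ext_R^{\,j-1}(C,B_n)\cong\cdots\cong\Ext_R^1(C,B_{n-j+2})$ reduces to the vanishing of $\Ext_R^1(C,B_\ell)$ for $2\le\ell\le n$ established above, while $j=1$ is the vanishing $\Ext_R^1(C,K)=0$; and when $n=0$ assertion (2) is empty. I expect the main obstacle to be the bookkeeping in the third step: identifying, via the $\mu_{F_i}$ and naturality, both the homology of $(D_\bullet)_*$ and the induced map on $H_0$ with $\mu_N$, and checking that the assorted connecting-homomorphism isomorphisms genuinely assemble into exact sequences rather than merely abstract isomorphisms of modules.
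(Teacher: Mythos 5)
Your proof is correct and takes essentially the same route as the paper: tensor the minimal flat resolution with $C$, use that flat modules lie in $\mathcal{A}_C(S)$ (so each $\mu_{F_i}$ is an isomorphism and $\Ext_R^{\geq 1}(C,C\otimes_SF_i)=0$), and dimension-shift along the resulting exact sequence after applying $(-)_*$. The only difference is that where the paper cites \cite[Proposition 3.2]{TH} for the base case $n=0$ and \cite[Lemma 2.3(1)]{TH}, \cite[Lemma 4.1]{HW07} for the facts about flat modules, you prove these inputs directly (via the isomorphism of complexes $F_\bullet\cong (C\otimes_SF_\bullet)_*$ and a Lazard-type closure argument), making the argument self-contained but not different in substance.
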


\begin{proof}
(1) The case for $n=0$ follows from \cite[Proposition 3.2]{TH}. Now suppose $n\geq 1$. If $\Tor_{1\leq i\leq n}^S(C,N)=0$,
then the exact sequence (4.2) yields the following exact sequence
$$0\to \Ker(1_C\otimes f_{n+1}) \to C\otimes_SF_{n+1}(N) \buildrel {1_C\otimes f_{n+1}}\over \longrightarrow
C\otimes_SF_{n}(N) \buildrel {1_C\otimes f_{n}}\over \longrightarrow\cdots$$
$$\buildrel {1_C\otimes f_{2}}\over \longrightarrow C\otimes_SF_{1}(N) \buildrel {1_C\otimes f_{1}}
\over \longrightarrow C\otimes_SF_{0}(N) \buildrel {1_C\otimes f_{0}}\over \longrightarrow C\otimes_SN \to 0\eqno{(4.3)}$$
in $\Mod R$. Because all $C\otimes_SF_i(N)$ are in $_RC^{\bot}$ by \cite[Lemma 2.3(1)]{TH}, we have
$$\Ext_R^{1}(C,\Ker(1_C\otimes f_1))\cong \Ext_R^{n+1}(C,\Ker(1_C\otimes f_n)),$$
$$\Ext_R^{2}(C,\Ker(1_C\otimes f_1))\cong \Ext_R^{n+2}(C,\Ker(1_C\otimes f_n)).$$
Now the assertion follows from \cite[Proposition 3.2]{TH}.

(2) Applying the functor $(-)_*$ to the exact sequence (4.3) we get the following commutative diagram
$$\xymatrix{
&&F_{n+1}(N) \ar[d]^{\mu_{F_{n+1}(N)}}\ar[r]^{\ \ \ \ f_{n+1}}& F_n(N)\ar[d]^{\mu_{F_{n}(N)}}\ar[r]^{f_n}
& \cdots \ar[r]^{f_1} & F_0(N)\ar[d]^{\mu_{F_{0}(N)}}\\
0\ar[r]&(\Ker(1_C\otimes f_{n+1}))_*\ar[r]& (C\otimes_SF_{n+1}(N))_*\ar[r]^{\ \ \ \ (1_C\otimes f_{n+1})_*\ \ \ \ }
& (C\otimes_SF_{n}(N))_*\ar[r]^{\ \ \ \ \ \ (1_C\otimes f_{n})_*}\ar[r]& \cdots \ar[r]^{(1_C\otimes f_{1})_* \ \ \ \ \ \ \ }& (C\otimes_SF_{0}(N))_*.}$$
All columns are isomorphisms by \cite[Lemma 4.1]{HW07}. So the bottom row in this diagram is exact.
Because all $C\otimes_SF_{i}(N)$ are in $_RC^{\bot}$, we have
$\Ext_R^{1\leq i \leq n}(C,\Ker(1_C\otimes f_{n+1}))=0$.
\end{proof}

Let $X\in \Mod R$ and let $$\cdots \buildrel {g_{n+1}} \over \longrightarrow P_n \buildrel {g_n} \over \longrightarrow
\cdots \buildrel {g_2} \over \longrightarrow P_1 \buildrel {g_1} \over \longrightarrow P_0 \buildrel {g_0} \over \longrightarrow X \to 0$$
be a projective resolution of $X$ in $\Mod R$. If there exists $n\geq 1$ such that $\Im g_n\cong \oplus_jW_j$, where each $W_j$ is
isomorphic to a direct summand of some $\Im g_{i_j}$ with $i_j<n$, then we say that $X$ {\bf has an ultimately closed projective resolution at $n$};
and we say that $X$ {\bf has an ultimately closed projective resolution} if it has an ultimately closed projective resolution at some $n$ (\cite{J}).
It is trivial that if $\pd_{R}X$ (the projective dimension of $X$) $\leq n$, then $X$ has an ultimately closed projective resolution at $n+1$.
Let $R$ be an artin algebra. If either $R$ is of finite representation type or the square of the radical of $R$ is zero,
then any finitely generated left $R$-module has an ultimately closed projective resolution (\cite[p.341]{J}).
Following \cite{Wi}, a module $N\in\Mod S$ is called {\bf $C$-adstatic} if $\mu_N$ is an isomorphism.

\begin{proposition}\label{4.8}
Let $N\in\Mod S$ and $n\geq 1$. If $\Tor_{1\leq i\leq n}^S(C,N)=0$, then $N$ is $C$-adstatic
provided that one of the following conditions is satisfied.
\begin{enumerate}
\item[(1)] $\pd_{R}C\leq n$.
\item[(2)] $_RC$ has an ultimately closed projective resolution at $n$.
\end{enumerate}
\end{proposition}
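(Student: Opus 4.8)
The plan is to show that under either hypothesis the error terms $\Ext_R^{n+1}(C,\Ker(1_C\otimes f_{n+1}))$ and $\Ext_R^{n+2}(C,\Ker(1_C\otimes f_{n+1}))$ appearing in Lemma \ref{4.7}(1) vanish, so that $\mu_N$ becomes an isomorphism, i.e.\ $N$ is $C$-adstatic. Write $L:=\Ker(1_C\otimes f_{n+1})$. By Lemma \ref{4.7}(1) there is an exact sequence
$$0\rightarrow \Ext_R^{n+1}(C,L)\rightarrow N\stackrel{\mu_N}{\longrightarrow}(C\otimes_SN)_*\rightarrow \Ext_R^{n+2}(C,L)\rightarrow 0,$$
so it suffices to prove $\Ext_R^{n+1}(C,L)=0=\Ext_R^{n+2}(C,L)$; in fact it is enough to show $\Ext_R^{\geq n+1}(C,L)=0$.

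For part (1): if $\pd_R C\leq n$, then $\Ext_R^{\geq n+1}(C,M)=0$ for every $M\in\Mod R$, in particular for $M=L$, and we are done immediately.

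For part (2): here I would exploit the dimension shift already recorded inside the proof of Lemma \ref{4.7}. The exact sequence (4.3) exhibits $L$ as an $(n+1)$-st syzygy (relative to the resolution $C\otimes_S F_\bullet(N)$) of $C\otimes_S N$, and each $C\otimes_S F_i(N)$ lies in $_RC^{\bot}$ by \cite[Lemma 2.3(1)]{TH}; hence for every $k\geq 1$ one has $\Ext_R^{k}(C,L)\cong\Ext_R^{n+1+k}(C,C\otimes_SN)$, so it is enough to show $\Ext_R^{\geq n+2}(C,C\otimes_SN)=0$, or even just that $\Ext_R^{j}(C,-)$ vanishes on all $R$-modules for $j$ large enough in a controlled way — but that is false in general, so the real input must be the ultimately closed hypothesis on the projective resolution of $_RC$. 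The key observation is: if $_RC$ has an ultimately closed projective resolution at $n$, say $\Im g_n\cong\bigoplus_j W_j$ with each $W_j$ a direct summand of some $\Im g_{i_j}$, $i_j<n$, then for any module $M$ with $\Ext_R^{1\leq i\leq n}(C,M)=0$ one can propagate this vanishing upward: $\Ext_R^{n+k}(C,M)\cong\Ext_R^{k}(\Im g_n,M)\cong\bigoplus_j\Ext_R^{k}(W_j,M)$, and since $W_j\mid\Im g_{i_j}$ we get $\Ext_R^{k}(W_j,M)$ as a direct summand of $\Ext_R^{i_j+k}(C,M)$; because $i_j<n$, when $1\le k\le n-i_j$ this sits inside the range $\Ext_R^{1\le i\le n}(C,M)=0$, which lets one run an induction on $k$ (the step $k\rightsquigarrow k+$ something stays inside the already-established vanishing window) and conclude $\Ext_R^{\geq 1}(C,M)=0$ for all such $M$, hence $M\in{_RC^{\bot}}$. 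Now apply this with $M=L$: by Lemma \ref{4.7}(2) we have $\Ext_R^{1\leq i\leq n}(C,L)=0$, so the propagation gives $\Ext_R^{\geq n+1}(C,L)=0$ (indeed $\Ext_R^{\geq 1}(C,L)=0$), and Lemma \ref{4.7}(1) forces $\mu_N$ to be an isomorphism.

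The main obstacle is getting the bookkeeping of the induction in part (2) exactly right: one must check that the indices $i_j+k$ produced by peeling off the summands $W_j\mid\Im g_{i_j}$ never jump past the window where vanishing is known before the induction has had a chance to extend that window, i.e.\ that the recursion $\Ext_R^{n+k}(C,M)\hookrightarrow\bigoplus_j(\text{summands of }\Ext_R^{i_j+k}(C,M))$ is genuinely well-founded. Because $i_j<n$ for every $j$, each step strictly decreases the "deficiency" $n-i_j$, so a straightforward finite induction (or a single application of the fact that an ultimately closed resolution has $\Im g_n$ built from earlier, lower syzygies together with $\Ext$-periodicity-type shifting) closes the argument. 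Everything else is a direct quotation of Lemma \ref{4.7} and \cite[Lemma 2.3(1)]{TH}.
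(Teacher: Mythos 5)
Your argument is correct and follows essentially the paper's own proof: part (1) is read off directly from Lemma \ref{4.7}(1), and for part (2) you combine Lemma \ref{4.7}(2) with the dimension shift $\Ext_R^{n+k}(C,-)\cong\Ext_R^{k}(\Im g_n,-)$ and the summand relation between $W_j$ and $\Im g_{i_j}$ with $i_j<n$, exactly as the paper does (the paper only carries this out for the two degrees $n+1$ and $n+2$ that Lemma \ref{4.7}(1) requires, whereas your induction gives all degrees $\geq n+1$, which is harmless). The only blemishes are cosmetic and occur in material you yourself discard or that does not affect vanishing: the shift in your abandoned aside should read $\Ext_R^{k}(C,C\otimes_SN)\cong\Ext_R^{k+n+2}(C,L)$ rather than the direction you wrote, and $\Ext_R^{k}(\oplus_jW_j,-)$ is a product $\Pi_j\Ext_R^{k}(W_j,-)$, not a coproduct.
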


\begin{proof} (1) It follows directly from Lemma \ref{4.7}(1).

(2) Let
$$\cdots \buildrel {g_{n+1}} \over \longrightarrow P_n \buildrel {g_n} \over \longrightarrow
\cdots \buildrel {g_2} \over \longrightarrow P_1 \buildrel {g_1} \over \longrightarrow
P_0 \buildrel {g_0} \over \longrightarrow C \to 0$$ be a projective resolution of $C$ in $\Mod R$
ultimately closed at $n$. Then $\Im g_n\cong \oplus_jW_j$ such that each $W_j$ is
isomorphic to a direct summand of some $\Im g_{i_j}$ with $i_j<n$. Let $N\in\Mod S$ with
$\Tor_{1\leq i\leq n}^S(C,N)=0$. By Lemma \ref{4.7}(2), we have
$$\Ext^{1}_R(\Im g_{i_j},\Ker(1_C\otimes f_{n+1}))\cong \Ext^{i_j+1}_R(C,\Ker(1_C\otimes f_{n+1}))=0.$$
Because $W_j$ is isomorphic to a direct summand of some $\Im g_{i_j}$, we have $\Ext^{1}_R(W_j,\Ker(1_C\otimes f_{n+1}))=0$
for any $j$, which implies
\begin{align*}
&\ \ \ \ \Ext^{n+1}_R(C,\Ker(1_C\otimes f_{n+1}))\\
& \cong \Ext^{1}_R(\Im g_{n},\Ker(1_C\otimes f_{n+1}))\\
& \cong \Ext^{1}_R(\oplus_jW_j,\Ker(1_C\otimes f_{n+1}))\\
& \cong \Pi_j\Ext^{1}_R(W_j,\Ker(1_C\otimes f_{n+1}))\\
& =0.
\end{align*}
Then by Lemma \ref{4.7}(2), we conclude that $\Ext_R^{1\leq i \leq n+1}(C,\Ker(1_C\otimes f_{n+1}))=0$.
Similar to the above argument we get $\Ext_R^{n+2}(C,\Ker(1_C\otimes f_{n+1}))=0$.
It follows from Lemma \ref{4.7}(1) that $\mu_N$ is an isomorphism and $N$ is $C$-adstatic.
\end{proof}

\begin{corollary}\label{4.9}
For any $n\geq 1$,
a module $N\in \Mod S$ satisfying $\Tor_{0\leq i\leq n}^S(C,N)=0$ implies $N=0$ provided that
one of the following conditions is satisfied.
\begin{enumerate}
\item[(1)] $\pd_{R}C\leq n$.
\item[(2)] $_RC$ has an ultimately closed projective resolution at $n$.
\end{enumerate}
\end{corollary}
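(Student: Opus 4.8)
\textbf{Proof proposal for Corollary \ref{4.9}.}
The plan is to deduce this as an immediate consequence of Proposition \ref{4.8} together with the injectivity part of the map $\mu_N$. Suppose $N\in\Mod S$ satisfies $\Tor_{0\leq i\leq n}^S(C,N)=0$; in particular $\Tor_{1\leq i\leq n}^S(C,N)=0$, so under either hypothesis (1) or (2) Proposition \ref{4.8} applies and tells us that $N$ is $C$-adstatic, i.e. the canonical map $\mu_N\colon N\to (C\otimes_SN)_*$ is an isomorphism. On the other hand the remaining vanishing $\Tor_0^S(C,N)=C\otimes_SN=0$ forces $(C\otimes_SN)_*=\Hom_R(C,0)=0$. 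Combining these, $N\cong(C\otimes_SN)_*=0$, which is exactly the assertion.

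Concretely I would write: by Proposition \ref{4.8}, since $\Tor_{1\leq i\leq n}^S(C,N)=0$ and one of (1),(2) holds, $\mu_N$ is an isomorphism; since also $\Tor_0^S(C,N)=C\otimes_SN=0$, we get $N\cong(C\otimes_SN)_*=0$. There is essentially no obstacle here — the entire content has already been extracted in Proposition \ref{4.8} and Lemma \ref{4.7}, and this corollary is just the observation that adding the degree-zero Tor-vanishing to the hypotheses of Proposition \ref{4.8} collapses $N$ to zero via the isomorphism $\mu_N$. The only thing to be mildly careful about is that Proposition \ref{4.8} is stated for $n\geq 1$, which matches the hypothesis of the corollary, so no edge case arises.

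If one wanted a self-contained argument avoiding the black-box citation inside Proposition \ref{4.8}, the alternative is to invoke Lemma \ref{4.7}(1) directly: the hypothesis $\Tor_{1\leq i\leq n}^S(C,N)=0$ gives the four-term exact sequence
$$0\to\Ext_R^{n+1}(C,\Ker(1_C\otimes f_{n+1}))\to N\stackrel{\mu_N}{\to}(C\otimes_SN)_*\to\Ext_R^{n+2}(C,\Ker(1_C\otimes f_{n+1}))\to 0,$$
and then $C\otimes_SN=0$ makes the middle term $(C\otimes_SN)_*$ vanish, so $N$ embeds into $0$. But this is strictly weaker than what we already have, since the cases (1),(2) are precisely the situations in which Proposition \ref{4.8} upgrades the middle map to an isomorphism; so the cleanest writeup is the two-line deduction from Proposition \ref{4.8}.
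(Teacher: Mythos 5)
Your main argument is correct and is exactly the paper's proof: by Proposition \ref{4.8} the hypotheses (1) or (2) together with $\Tor_{1\leq i\leq n}^S(C,N)=0$ make $\mu_N$ an isomorphism, and $C\otimes_SN=0$ then gives $N\cong (C\otimes_SN)_*=0$. One small caveat about your side remark: the ``alternative'' using Lemma \ref{4.7}(1) alone does not work as stated, since when $(C\otimes_SN)_*=0$ the four-term sequence only yields $N\cong\Ext_R^{n+1}(C,\Ker(1_C\otimes f_{n+1}))$ (the map $\mu_N$ need not be injective a priori), and killing that $\Ext$ group is precisely what hypotheses (1) and (2) achieve inside the proof of Proposition \ref{4.8} --- but as you correctly chose the main route for the writeup, this does not affect your proof.
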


\begin{proof}
Let $N\in \Mod S$ with $\Tor_{0\leq i\leq n}^S(C,N)=0$. By Proposition \ref{4.8},
we have that $N$ is $C$-adstatic and $N\cong (C\otimes_SN)_*=0$.
\end{proof}

We now are in a position to give the following

\begin{theorem}\label{4.10}
If $_RC$ has an ultimately closed projective resolution, then
$$\mathcal{A}_C(S)={{C_S}^{\top}}={^{\bot}\mathcal{I}_C(S)}.$$
\end{theorem}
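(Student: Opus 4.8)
The second equality ${{C_S}^{\top}}={^{\bot}\mathcal{I}_C(S)}$ is Lemma \ref{4.3}, so the plan is to prove $\mathcal{A}_C(S)={{C_S}^{\top}}$. The inclusion ``$\subseteq$'' is immediate, because condition (a1) in the (symmetric) definition of $\mathcal{A}_C(S)$ in Definition \ref{3.1}(3) says precisely that every module of $\mathcal{A}_C(S)$ lies in ${{C_S}^{\top}}$. For ``$\supseteq$'' I would fix $A\in\Mod S$ with $\Tor_{\geq 1}^S(C,A)=0$ and verify the three defining conditions (a1)--(a3) of $\mathcal{A}_C(S)$ for $A$. Here (a1) is the hypothesis; and (a3)---that $\mu_A$ is an isomorphism, i.e. that $A$ is $C$-adstatic---is exactly Proposition \ref{4.8}(2) applied to $A$, since the ultimately closed index of $_RC$ can be taken to be some $n\geq 1$ and $\Tor_{1\leq i\leq n}^S(C,A)=0$. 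So everything reduces to condition (a2): $C\otimes_SA\in{_RC}^{\bot}$, that is, $\Ext_R^{\geq 1}(C,C\otimes_SA)=0$.

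To obtain (a2), I would set $M:=\Ker(1_C\otimes f_{n+1})$ for a minimal flat resolution of $A$ written as in (4.2), and also fix a projective resolution $\cdots\buildrel {g_2}\over\longrightarrow P_1\buildrel {g_1}\over\longrightarrow P_0\buildrel {g_0}\over\longrightarrow C\to 0$ that is ultimately closed at $n$, so that $\Im g_n\cong\bigoplus_j W_j$ with each $W_j$ isomorphic to a direct summand of some $\Im g_{i_j}$, $i_j<n$. Since $\Tor_{\geq 1}^S(C,A)=0$, base-changing (4.2) along $C\otimes_S-$ produces the exact sequence (4.3),
$$0\to M\to C\otimes_SF_{n+1}(A)\to\cdots\to C\otimes_SF_0(A)\to C\otimes_SA\to 0,$$
in which each $C\otimes_SF_i(A)$ lies in ${_RC}^{\bot}$ by \cite[Lemma 2.3(1)]{TH}; dimension shifting along it yields $\Ext_R^j(C,C\otimes_SA)\cong\Ext_R^{j+n+2}(C,M)$ for every $j\geq 1$. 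Thus it suffices to prove $\Ext_R^{\geq 1}(C,M)=0$, which I would do by induction on the degree. Lemma \ref{4.7}(2) gives the base range $\Ext_R^{1\leq i\leq n}(C,M)=0$. For the inductive step, assume $\Ext_R^{1\leq i\leq k}(C,M)=0$ for some $k\geq n$; dimension shifting along the projective resolution of $C$ together with the decomposition $\Im g_n\cong\bigoplus_j W_j$ gives
$$\Ext_R^{k+1}(C,M)\cong\Ext_R^{k+1-n}(\Im g_n,M)\cong\prod_j\Ext_R^{k+1-n}(W_j,M),$$
and each $\Ext_R^{k+1-n}(W_j,M)$ is a direct summand of $\Ext_R^{k+1-n}(\Im g_{i_j},M)\cong\Ext_R^{k+1-n+i_j}(C,M)$, which vanishes because $1\leq k+1-n+i_j\leq k$. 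Hence $\Ext_R^{k+1}(C,M)=0$, the induction closes, and (a2) holds. Combining (a1)--(a3) gives $A\in\mathcal{A}_C(S)$, so ${{C_S}^{\top}}\subseteq\mathcal{A}_C(S)$ and the theorem follows.

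The step I expect to be the main obstacle is this last induction. Lemma \ref{4.7}(2) and the proof of Proposition \ref{4.8}(2) only furnish $\Ext_R(C,M)$-vanishing in a bounded range of degrees---indeed, the cases $k=n$ and $k=n+1$ of the displayed step are exactly what is carried out there---and the crux of the argument is that the ultimately closed structure of $_RC$ lets one propagate this to all degrees: the bound $i_j\leq n-1$ keeps the shifted exponent $k+1-n+i_j$ inside the range $[1,k]$ already known to be $\Ext_R(C,-)$-acyclic, so the induction never leaves the known zone. Everything else is either a hypothesis, a direct citation (Lemmas \ref{4.3} and \ref{4.7}, Proposition \ref{4.8}, and \cite[Lemma 2.3(1)]{TH}), or routine dimension shifting, and no genuinely new ingredient beyond what precedes Theorem \ref{4.10} in the paper is needed.
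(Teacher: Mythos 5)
Your proposal is correct, but it proves the hard inclusion ${C_S}^{\top}\subseteq\mathcal{A}_C(S)$ by a genuinely different route than the paper. You verify the three defining conditions of $\mathcal{A}_C(S)$ directly: (a1) is the hypothesis, (a3) is Proposition \ref{4.8}(2), and for (a2) you dimension-shift along (4.3) and then run an induction that propagates the bounded vanishing $\Ext_R^{1\leq i\leq n}(C,\Ker(1_C\otimes f_{n+1}))=0$ of Lemma \ref{4.7}(2) to all degrees, using $\Im g_n\cong\oplus_jW_j$ with $i_j\leq n-1$ to keep the shifted exponent $k+1-n+i_j$ inside $[1,k]$; your step for $k=n,n+1$ is literally the computation inside the paper's proof of Proposition \ref{4.8}(2), and the general step is checked correctly (the shifts $\Ext_R^{j}(\Omega^iC,M)\cong\Ext_R^{j+i}(C,M)$ are legitimate since $j=k+1-n\geq 1$). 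The paper instead takes $N\in{^{\bot}\mathcal{I}_C(S)}$, uses Proposition \ref{4.8} only to make $\mu_N$ an isomorphism, and then invokes the Section 3 machinery: the $\mathcal{B}_C(R)$-preenvelope of $C\otimes_SN$ is monic because $\mathcal{B}_C(R)$ is injectively coresolving, so Theorem \ref{3.7}(1) yields a monic $\mathcal{A}_C(S)$-preenvelope; iterating gives a $\Hom_S(-,\mathcal{A}_C(S))$-exact coresolution of $N$ by modules in $\mathcal{A}_C(S)$, which is spliced with $\mathcal{I}_C(S)$-coresolutions of its terms (\cite[Corollary 3.9]{Hu}) and then the coproper $\mathcal{I}_C(S)$-coresolution characterization of $\mathcal{A}_C(S)$ (\cite[Theorem 3.11(1)]{TH}) finishes. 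What each buys: your argument is more elementary and self-contained, never needs Theorem \ref{3.7}, \cite[Theorem 3.11(1)]{TH} or \cite[Corollary 3.9]{Hu}, and makes completely transparent that the ultimately closed hypothesis is exactly an engine for pushing Ext-vanishing from a finite window to all degrees (so in particular it gives condition (a2) by explicit computation); the paper's proof is less computational and exhibits the theorem as an application of the duality-pair/preenvelope results of Section 3 together with the coresolution characterization of the Auslander class, which fits the overall theme of the article. Both proofs rest on Lemma \ref{4.7} and Proposition \ref{4.8} at the adstatic step, and both begin from the easy inclusion via Lemma \ref{4.3}, so your reduction is sound as stated.
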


\begin{proof}
By the definition of $\mathcal{A}_C(S)$ and Lemma \ref{4.3}, we have
$\mathcal{A}_C(S)\subseteq{{C_S}^{\top}}={^{\bot}\mathcal{I}_C(S)}$.

Now let $N\in{^{\bot}\mathcal{I}_C(S)}$ and let $f:C\otimes_SN\to B$
be a $\mathcal{B}_C(R)$-preenvelope of $C\otimes_SN$ in $\Mod R$ as in Theorem \ref{3.7}.
Because $\mathcal{B}_C(R)$ is injectively coresolving
in $\Mod R$ by \cite[Theorem 6.2]{HW07}, $f$ is monic. By Proposition \ref{4.8},
$\mu_N$ is an isomorphism. Then by Theorem \ref{3.7}(1), we have a monic $\mathcal{A}_C(S)$-preenvelope
$$f^0:N\rightarrowtail A^0$$ of $N$, where $f^0=f_*\mu_N$ and $A^0=B_*$. So we have a
$\Hom_S(-,\mathcal{A}_C(S))$-exact exact sequence
$$0\to N \buildrel {f^0} \over \longrightarrow A^0 \to N^1 \to 0$$
in $\Mod S$, where $N^1=\Coker f^0$. Because $A^0\in{^{\bot}\mathcal{I}_C(S)}$,
we have $N^1\in{^{\bot}\mathcal{I}_C(S)}$. Similar to the above argument, we get a
$\Hom_S(-,\mathcal{A}_C(S))$-exact exact sequence
$$0\to N^1 \buildrel {f^1} \over \longrightarrow A^1 \to N^2 \to 0$$
in $\Mod S$ with $A^1\in\mathcal{A}_C(S)$ and $N^2\in{^{\bot}\mathcal{I}_C(S)}$. Repeating this procedure,
we get a $\Hom_S(-,\mathcal{A}_C(S))$-exact exact sequence
$$0\to N \buildrel {f^0} \over \longrightarrow A^0 \buildrel {f^1} \over \longrightarrow
A^1\buildrel {f^2} \over \longrightarrow \cdots \buildrel {f^i}
\over \longrightarrow A^i\buildrel {f^{i+1}} \over \longrightarrow \cdots$$
in $\Mod S$ with all $A^i$ in $\mathcal{A}_C(S)$. Because $\mathcal{I}_C(S)\subseteq\mathcal{A}_C(S)$
by \cite[Corollary 6.1]{HW07}, this exact sequence is $\Hom_S(-,\mathcal{I}_C(S))$-exact.
By \cite[Theorem 3.11(1)]{TH}, there exists a $\Hom_S(-,\mathcal{A}_C(S))$-exact exact sequence
$$0\to A^i\to U^i_0 \to U^i_1 \to \cdots \to U^i_j\to \cdots$$
in $\Mod S$ with all $U^i_j$ in $\mathcal{I}_C(S)$ for any $i,j\geq 0$. Then by \cite[Corollary 3.9]{Hu},
we get the following $\Hom_S(-,\mathcal{A}_C(S))$-exact exact sequence
$$0\to N\to U^0_0 \to U^0_1\oplus U^1_0 \to \cdots \to \oplus_{i=0}^{n}U^i_{n-i}\to \cdots$$
in $\Mod S$ with all terms in $\mathcal{I}_C(S)$. It follows from \cite[Theorem 3.11(1)]{TH} that
$N\in\mathcal{A}_C(S)$. The proof is finished.
\end{proof}

We use $\pd_{S^{op}}C$ and $\fd_{S^{op}}C$ to denote the projective and flat dimensions of $C_S$ respectively.

\begin{corollary}\label{4.11}
If $_RC$ has an ultimately closed projective resolution,
then the following statements are equivalent for any $n\geq 0$.
\begin{enumerate}
\item[(1)] $\pd_{S^{op}}C\leq n$.
\item[(2)] $\mathcal{A}_{C}(S)$-$\pd_{S}N \leq n$ for any $N\in\Mod S$.
\end{enumerate}
\end{corollary}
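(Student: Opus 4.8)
The plan is to show that statements (1) and (2) are each equivalent to the single condition
$$\Tor_{\geq n+1}^{S}(C,N)=0\ \text{for every}\ N\in\Mod S,$$
and then to note that this condition says exactly $\fd_{S^{op}}C\leq n$, which in turn is equivalent to $\pd_{S^{op}}C\leq n$ because of the finiteness hypothesis (a2) in the definition of a semidualizing bimodule. Thus the argument reduces to two essentially formal steps plus one classical module-theoretic fact.

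First I would record the following elementary translation. Since $\mathcal{A}_{C}(S)$ contains all projective $S$-modules and is closed under direct summands and coproducts by \cite[Theorem 6.2 and Proposition 4.2(a)]{HW07}, Lemma \ref{4.1} shows that, for any $N\in\Mod S$, one has $\mathcal{A}_{C}(S)$-$\pd_SN\leq n$ if and only if $\Omega^n(N)\in\mathcal{A}_{C}(S)$ (the forward implication is Lemma \ref{4.1} applied to a projective resolution of $N$; the converse follows by splicing a projective resolution of $\Omega^n(N)$ onto the first $n$ terms of a projective resolution of $N$). Now by the definition of the Auslander class we always have $\mathcal{A}_{C}(S)\subseteq{{C_S}^{\top}}$, and under the hypothesis that $_RC$ has an ultimately closed projective resolution Theorem \ref{4.10} upgrades this to the equality $\mathcal{A}_{C}(S)={{C_S}^{\top}}$. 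Combining these with dimension shifting gives, for every $N\in\Mod S$,
$$\mathcal{A}_{C}(S)\text{-}\pd_SN\leq n\ \Leftrightarrow\ \Omega^n(N)\in{{C_S}^{\top}}\ \Leftrightarrow\ \Tor_{\geq n+1}^{S}(C,N)=0.$$
Quantifying over all $N\in\Mod S$, statement (2) is equivalent to $\Tor_{\geq n+1}^{S}(C,N)=0$ for all $N$, i.e. to $\fd_{S^{op}}C\leq n$.

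It then remains to identify $\fd_{S^{op}}C\leq n$ with $\pd_{S^{op}}C\leq n$. The implication $\pd_{S^{op}}C\leq n\Rightarrow\fd_{S^{op}}C\leq n$ is trivial; for the converse I would invoke condition (a2) of Definition \ref{2.5}: $C_S$ admits a degreewise finite $S$-projective resolution, so if $\fd_{S^{op}}C\leq n$ then the $n$-th syzygy of $C_S$ in such a resolution is a finitely presented flat right $S$-module, hence projective, and therefore $\pd_{S^{op}}C\leq n$. Combined with the previous paragraph, this yields (1)$\Leftrightarrow$(2). The syzygy and dimension-shifting bookkeeping is routine; the only non-formal ingredients are the use of Theorem \ref{4.10} to replace ${{C_S}^{\top}}$ by $\mathcal{A}_{C}(S)$ and the classical fact that a finitely presented flat module is projective, and I do not anticipate any obstacle beyond keeping the ``for all $N$'' quantifier straight.
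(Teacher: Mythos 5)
Your proposal is correct and follows essentially the same route as the paper: both reduce statement (2) to the vanishing of $\Tor^S_{\geq n+1}(C,N)$ via the syzygy characterization of $\mathcal{A}_C(S)$-$\pd$ and the identification $\mathcal{A}_C(S)={C_S}^{\top}$ from Theorem \ref{4.10}, and then pass from $\fd_{S^{op}}C\leq n$ to $\pd_{S^{op}}C\leq n$. The only cosmetic differences are that you invoke Lemma \ref{4.1} directly rather than citing Theorem \ref{4.4}, and that you spell out the finitely-presented-flat-implies-projective justification for $\pd_{S^{op}}C=\fd_{S^{op}}C$ (via condition (a2)), which the paper uses without comment.
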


\begin{proof}
Assume that $_RC$ has an ultimately closed projective resolution. By Theorem \ref{4.10},
we have $\mathcal{A}_C(S)={{C_S}^{\top}}$. Then
it is easy to see that $C_S$ is flat (equivalently, projective) if and only if $\mathcal{A}_{C}(S)=\Mod S$,
so the assertion for the case $n=0$ follows. Now let $N\in\Mod S$ and $n\geq 1$.

$(2)\Rightarrow (1)$ By (2) and Theorem \ref{4.4}, we have
$\Omega^n(N)\in\mathcal{A}_{C}(S)(\subseteq{C_S}^\top)$. Then by the dimension shifting,
we have $\Tor_{\geq n+1}^S(C,N)=0$, and so $\pd_{S^{op}}C=\fd_{S^{op}}C\leq n$.

$(1)\Rightarrow (2)$ If $\pd_{S^{op}}C\leq n$, then $\Omega^n(N)\in{{C_S}^{\top}}$ by the dimension shifting.
By Theorem \ref{4.10}, we have $\Omega^n(N)\in\mathcal{A}_{C}(S)$ and $\mathcal{A}_{C}(S)$-$\pd_{S}N \leq n$.
\end{proof}

\vspace{0.3cm}

{\bf Acknowledgements.}
This research was partially supported by NSFC (Grant No. 11571164).


\begin{thebibliography}{99}
\bibitem{AB} M. Auslander and M. Bridger, Stable Module Theory,
Memoirs Amer. Math. Soc. {\bf 94}, Amer. Math. Soc., Providence, RI, 1969.

\bibitem{BR} A. Beligiannis and I. Reiten, Homological and Homotopical Aspects of Torsion Theories,
Mem. Amer. Math. Soc. {\bf 188} (883), Amer. Math. Soc., Providence, RI, 2007.

\bibitem{BBE} L. Bican, R. El Bashir and E. E. Enochs,  {\it All modules have flat covers}, Bull. London Math. Soc.
{\bf 33} (2001), 385--390.

\bibitem{E1} E. E. Enochs, {\it Injective and flat covers, envelopes and resolvents}, Israel J. Math. {\bf 39} (1981), 189--209.

\bibitem{EH} E. E. Enochs and H. Holm, {\it Cotorsion pairs associated with Auslander categories},
Israel J. Math. {\bf 174} (2009), 253--268.

\bibitem{EI} E. E. Enochs and A. Iacob, {\it Gorenstein injective covers and envelopes over noetherian rings},
Proc. Amer. Math. Soc. {\bf 143} (2015), 5--12.

\bibitem{EJ00} E. E. Enochs and O. M. G. Jenda, Relative Homological Algebra,
de Gruyter Expositions in Math. {\bf 30}, Walter de Gruyter GmbH \& Co. KG, Berlin, 2000.

\bibitem{GT12} R. G\"{o}bel and J. Trlifaj, Approximations and Endomorphism Algebras of Modules, de Gruyter
Expositions in Math. {\bf 41}, 2nd revised and extended edition, Walter de Gruyter GmbH \& Co. KG, Berlin-Boston, 2012.

\bibitem{HJ09} H. Holm and P. J{\o}rgensen, {\it Cotorsion pairs induced by duality pairs},
J. Commut. Algebra {\bf 1} (2009), 621--633.

\bibitem{HW07} H. Holm and D. White, {\it Foxby equivalence over associative rings}, J. Math. Kyoto Univ. {\bf 47} (2007), 781--808.

\bibitem{Hu}Z. Y. Huang, {\it Proper resolutions and Gorenstein categories}, J. Algebra \textbf{393} (2013), 142--169.

\bibitem{Hu2} Z.Y. Huang, {\it Homological dimensions relative to preresolving subcategories}, Kyoto J. Math. {\bf 54} (2014), 727--757.

\bibitem{J}
J. P. Jans, {\it Some generalizations of finite projective dimension}, Illinois J. Math. \textbf{5} (1961), 334--344.

\bibitem{LHX} Z. F. Liu, Z. Y. Huang and A. M. Xu, {\it Gorenstein projective dimension relative to
a semidualizing bimodule}, Comm. Algebra {\bf 41} (2013), 1--18.

\bibitem{MR} F. Mantese and I. Reiten, {\it Wakamatsu tilting modules}, J. Algebra {\bf 278} (2004), 532--552.




\bibitem{TH}  X. Tang and Z. Y. Huang, {\it Homological aspects of the adjoint cotranspose}, Colloq. Math. {\bf 150} (2017), 293--311.

\bibitem{W1} T. Wakamatsu, {\it On modules with trivial self-extensions}, J. Algebra {\bf 114} (1988), 106--114.

\bibitem{W2}  T. Wakamatsu, {\it Stable equivalence for self-injective algebras and a generalization of tilting modules},
J. Algebra {\bf 134} (1990), 298--325.

\bibitem{W3} T. Wakamatsu, {\it Tilting modules and Auslander's Gorenstein property}, J. Algebra {\bf 275} (2004), 3--39.

\bibitem{Wi} R. Wisbauer, {\it Static modules and equivalences}, Interactions Between Ring Theory and Representations of Algebras (Murcia),
Lecture Notes in Pure and Appl. Math. {\bf 210}, Dekker, New York, 2000, pp.423--449.

\end{thebibliography}
\end{document}